 \numberwithin{dummy}{section}
\newtheorem{algorithm}{Algorithm}
\newtheorem{remark}{Remark}
\newcommand{\bq}{{\bf q}}
\newcommand{\bx}{{\bf x}}
\def\Q{{\mathbb Q}}
\def\T{{\mathcal  T}}
\def\E{{\mathcal  E}}
\def\pT{{\partial T}}
\def\l{{\langle}}
\def\r{{\rangle}}
\def\T{{\mathcal  T}}
\def\E{{\mathcal  E}}
\def\bn{{\bf n}}
\def\bq{{\bf q}}
\def\3bar{{|\hspace{-.02in}|\hspace{-.02in}|}}
\title{A conforming discontinuous Galerkin finite element method: Part II}
\author{Xiu Ye\thanks{Department of
Mathematics, University of Arkansas at Little Rock, Little Rock, AR
72204 (xxye@ualr.edu). This research was supported in part by
National Science Foundation Grant DMS-1620016.}
\and
Shangyou Zhang\thanks{Department of
Mathematical Sciences, University of Delaware, Newark, DE 19716 (szhang@udel.edu).}
}
\begin{document}

\maketitle

\begin{abstract}
A conforming discontinuous Galerkin (DG) finite element method has been introduced in \cite{cdg1} on simplicial meshes, which has the flexibility of using discontinuous approximation  and the simplicity in formulation of  the classic continuous finite element method. The goal of this paper is to extend the conforming DG finite element method in \cite{cdg1} so that it  can  work on general polytopal meshes by designing weak gradient $\nabla_w$ appropriately. Two different conforming DG formulations on polytopal meshes are introduced which handle boundary conditions differently.
Error estimates of optimal order are established for the corresponding conforming DG approximation in both a discrete $H^1$ norm
and the $L^2$ norm. Numerical results are presented to confirm the theory.
\end{abstract}

\begin{keywords}
weak Galerkin, discontinuous Galerkin, stabilizer/penalty free,  finite element methods, second order elliptic problem
\end{keywords}

\begin{AMS}
Primary, 65N15, 65N30; Secondary, 35B45, 35J50
\end{AMS}
\pagestyle{myheadings}

\section{Introduction}
We consider Poisson equation with a homogeneous Dirichlet boundary condition in $d$ dimension
  as our model problem for the sake of clear presentation. This conforming DG method can also be used to  solve other elliptic problems.
The Poisson problem  seeks an unknown function $u$ satisfying
\begin{eqnarray}
-\Delta u&=&f\quad \mbox{in}\;\Omega,\label{pde}\\
u&=&0\quad\mbox{on}\;\partial\Omega,\label{bc}
\end{eqnarray}
where $\Omega$ is a bounded polytopal domain in $\mathbb{R}^d$.

The weak form of the problem (\ref{pde})-(\ref{bc}) is given as follows: find $u\in H_0^1(\Omega)$
such that
\begin{eqnarray}
(\nabla u,\nabla v)=(f,v)\quad \forall v\in
H_0^1(\Omega).\label{weakform}
\end{eqnarray}

The $H^1$ conforming finite element method for the problem (\ref{pde})-(\ref{bc}) keeps the same simple form as in (\ref{weakform}): find $u_h\in V_h\subset H^1_0(\Omega)$
such that
\begin{eqnarray}
(\nabla u_h,\nabla v)=(f,v)\quad \forall v\in V_h,\label{cfe}
\end{eqnarray}
where $V_h$ is a finite dimensional subspace of $H_0^1(\Omega)$.
The functions in $V_h$ are required to be continuous that makes the
classic conforming finite element formulation (\ref{cfe}) less flexible in  element construction and in mesh generation. These limitations are caused by strong continuity requirement of  functions in finite element spaces. A solution to avoid these limitations is  using discontinuous  functions in finite element spaces.

Researchers started to use discontinuous approximation in finite element procedure in the early 1970s \cite{Babu73, bos, DoDu76,ReHi73, Whee78}.
Local discontinuous Galerkin methods were introduced in \cite{cs1998}.   Then a paper \cite{abcm} in 2002 provides a unified analysis of discontinuous Galerkin  finite element methods for Poisson equation.
Since then, many new finite element methods with discontinuous approximations have been developed such as
hybridizable discontinuous Galerkin  method \cite{cgl}, mimetic finite differences method \cite{Lipnikov2011},
hybrid high-order   method \cite{de}, weak Galerkin  method \cite{wy}  and references therein.

One obvious disadvantage of discontinuous finite element methods is their rather complex
formulations which are often necessary to ensure connections of discontinuous solutions across element boundaries.
The purpose of this paper is to obtain a finite element
    formulation close to its original PDE weak form (\ref{weakform})
    for discontinuous polynomials.
We believe that finite element formulations for discontinuous approximations
     can be as simple as follows:
\begin{equation}\label{dfe}
(\nabla_w u_h,\nabla_w v)=(f,v)\quad\forall v\in V_h,
\end{equation}
if  $\nabla_w$, an approximation of gradient,  is appropriately defined for  discontinuous polynomials in $V_h$.
The formulation (\ref{dfe}) can be viewed as a counterpart of (\ref{weakform}) for discontinuous approximations.

In \cite{cdg1}, we have developed a discontinuous finite element method that has an ultra simple weak formulation (\ref{dfe}) on triangular/tetrahedal meshes for any polynomial degree $k\ge 1$.
The formulation (\ref{dfe}) has also been achieved for a WG method defined in \cite{wy} on triangular/tetrahedral meshes.
The lowest order WG method developed in \cite{wy} has been improved in \cite{liu} for convex polygonal meshes,
in which non-polynomial functions are used for computing weak gradient.

The purpose of this paper is to extend the conforming DG in \cite{cdg1} so that it can work on general polytopal meshes. The idea is to raise the degree of polynomials used to compute weak gradient $\nabla_w$.  Using higher degree polynomials in computation of weak gradient will not change
the size, neither the global sparsity of the stiffness matrix. On the other side, the simple formulation of conforming DG (\ref{dfe}) will reduce programming complexity significantly. In this paper, two conforming DG formulations on polytopal mesh are introduced for the equations (\ref{pde})-(\ref{bc}). These two methods are different in handling the homogeneous boundary condition. Optimal order error estimates are established for the corresponding
conforming DG approximations in both a discrete $H^1$ norm and the  $L^2$ norm. Numerical results are presented verifying the theorem.

\section{Finite Element Method}\label{Section:mwg}

In this section, we will introduce the conforming DG method.
For any given polygon $D\subseteq\Omega$, we use the standard
definition of Sobolev spaces $H^s(D)$ with $s\ge 0$. The associated inner product,
norm, and semi-norms in $H^s(D)$ are denoted by
$(\cdot,\cdot)_{s,D}$, $\|\cdot\|_{s,D}$, and $|\cdot|_{s,D}$, respectively. When $s=0$, $H^0(D)$ coincides with the space
of square integrable functions $L^2(D)$. In this case, the subscript
$s$ is suppressed from the notation of norm, semi-norm, and inner
products. Furthermore, the subscript $D$ is also suppressed when
$D=\Omega$.

Let ${\mathcal T}_h$ be a partition of the domain $\Omega$ consisting of
polygons in two dimension or polyhedra in three dimension satisfying
a set of conditions specified in \cite{wymix} and additional conditions specified in Lemma \ref{poly}. Denote by
${\mathcal E}_h$ the set of all edges/faces in ${\mathcal T}_h$, and let
${\mathcal E}_h^0={\mathcal E}_h\backslash\partial\Omega$ be the set of all
interior edges/faces. For simplicity, we will use term edge for edge/face without confusion.

For simplicity, we adopt the following notations,
\begin{eqnarray*}
(v,w)_{\T_h} &=& \sum_{T\in\T_h}(v,w)_T=\sum_{T\in\T_h}\int_T vw d\bx,\\
 \l v,w\r_{\partial\T_h}&=&\sum_{T\in\T_h} \l v,w\r_\pT=\sum_{T\in\T_h} \int_\pT vw ds.
\end{eqnarray*}
Let $P_k(K)$ consist all the polynomials degree less or equal to $k$ defined on $K$.

\begin{algorithm}
A conforming DG finite element method for the problem (\ref{pde})-(\ref{bc})
seeks $u_h\in V_h$  satisfying
\begin{eqnarray}
(\nabla_w u_h,\nabla_w v)_{\T_h} &=&(f,\;v)\quad\forall v\in V_h.\label{mwg}
\end{eqnarray}
\end{algorithm}

The weak gradient $\nabla_w$ in the equation (\ref{mwg}) is defined as follows \cite{mwg, mwg1,wy,wymix}.
For a given $T\in\T_h$ and a function $v\in V_h+H_0^1(\Omega)$, the weak gradient $\nabla_wv\in [P_j(T)]^d$ on $T$ satisfies the following equation,
\begin{equation}\label{d-d}
(\nabla_{w} v, \bq)_T = -(v,\nabla\cdot \bq)_T+ \langle \{v\},
\bq\cdot\bn\rangle_{\partial T}\qquad \forall \bq\in [P_j(T)]^d,
\end{equation}
where $j$ and $\{v\}$ will be defined later.

In the following, we will introduce two finite element formulations by choosing  the vector spaces $V_h$ and the definition of average $\{\cdot\}$ differently.

Let $T_1$ and $T_2$ be two polygons/polyhedrons
sharing $e$ if $e\in\E_h^0$.  For $e\in\E_h$ and $v\in V_h+H_0^1(\Omega) $, the jump $[v]$ is defined as
\begin{equation}\label{jump}
[v]=v\quad {\rm if} \;e\subset \partial\Omega,\quad [v]=v|_{T_1}-v|_{T_2}\;\; {\rm if} \;e\in\E_h^0.
\end{equation}
The order of $T_1$ and $T_2$ is not essential.

{\bf Case 1. Strongly enforce boundary condition}

In this case, $V_h$ is defined  for $k\ge 1$ as
\begin{equation}\label{Vh1}
V_h=\left\{ v\in L^2(\Omega):\ v|_{T}\in
P_{k}(T)\;\; T\in\T_h, \quad v|_{\partial \Omega}=0 \right\}.
\end{equation}

For $e\in\E_h$ and $v\in V_h+H_0^1(\Omega)$, the average $\{v\}$ is defined as
\begin{equation}\label{avg1}
\{v\}=v\quad {\rm if} \;e\subset \partial\Omega,\quad \{v\}=\frac12(v|_{T_1}+v|_{T_2})\;\; {\rm if} \;e\in\E_h^0.
\end{equation}

{\bf Case 2. Weakly enforce boundary condition}

Here, $V_h$ is defined for $k\ge 1$ as
\begin{equation}\label{Vh}
V_h=\left\{ v\in L^2(\Omega):\ v|_{T}\in
P_{k}(T),\;\; T\in\T_h\right\}.
\end{equation}

For $e\in\E_h$ and $v\in V_h+H_0^1(\Omega)$, the average $\{v\}$ is defined  as
\begin{equation}\label{avg2}
\{v\}=0\quad {\rm if} \;e\subset \partial\Omega,\quad \{v\}=\frac12(v|_{T_1}+v|_{T_2})\;\; {\rm if} \;e\in\E_h^0.
\end{equation}

\begin{remark}
For the finite element formulation (\ref{mwg}) associated with Case 1, we assume that each element  $T\in \mathcal {T}_h$ has no more than two edges on $\partial\Omega$  in 2D,  or no more than 3 faces on $\partial\Omega$ in 3D. This requirement is only needed for  error analysis.
In practice, we cannot find any meshes consisting of elements sharing  more than two edges in 2D and three faces in 3D with $\partial\Omega$ after any mesh refinement.
\end{remark}


Let $\Q_h$ be the element-wise defined $L^2$ projection onto $[P_{j}(T)]^d$ on each element $T$.

\begin{lemma}
Let $\phi\in H_0^1(\Omega)$, then on  $T\in\T_h$
\begin{equation}\label{key}
\nabla_w\phi =\Q_h\nabla\phi.
\end{equation}
\end{lemma}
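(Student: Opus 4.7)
The plan is to verify the claimed identity by testing both sides against an arbitrary $\bq \in [P_j(T)]^d$ and showing that the resulting scalars agree, so that the two elements of the finite-dimensional space $[P_j(T)]^d$ must coincide.

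First I would use the defining property of the $L^2$ projection to rewrite
$(\Q_h \nabla \phi, \bq)_T = (\nabla \phi, \bq)_T$ for all $\bq \in [P_j(T)]^d$. Since $\phi \in H_0^1(\Omega) \subset H^1(T)$ on each element, integration by parts is applicable and gives
\begin{equation*}
(\nabla \phi, \bq)_T = -(\phi, \nabla \cdot \bq)_T + \langle \phi, \bq \cdot \bn \rangle_{\partial T}.
\end{equation*}
Comparing with the definition (\ref{d-d}) of $\nabla_w \phi$, the whole claim reduces to showing that
\begin{equation*}
\langle \phi, \bq \cdot \bn \rangle_{\partial T} = \langle \{\phi\}, \bq \cdot \bn \rangle_{\partial T}
\qquad \forall \bq \in [P_j(T)]^d,
\end{equation*}
that is, that the single-valued trace $\phi$ coincides with the averaged trace $\{\phi\}$ on every edge/face of $T$.

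I would then check this edge-by-edge. If $e \in \E_h^0$ is an interior edge shared by $T$ and a neighbor, the trace of $\phi \in H^1(\Omega)$ is single-valued across $e$, hence $\phi|_{T_1} = \phi|_{T_2}$ and so $\{\phi\} = \tfrac12(\phi|_{T_1}+\phi|_{T_2}) = \phi$ on $e$. If $e \subset \partial\Omega$, then $\phi|_e = 0$ in the trace sense because $\phi \in H_0^1(\Omega)$; in Case~1 the definition (\ref{avg1}) gives $\{\phi\} = \phi = 0$, and in Case~2 the definition (\ref{avg2}) gives $\{\phi\} = 0 = \phi$. Thus in both formulations $\{\phi\} = \phi$ on $\partial T$, which yields the desired boundary identity.

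Combining these steps, $(\nabla_w \phi - \Q_h \nabla \phi, \bq)_T = 0$ for every $\bq \in [P_j(T)]^d$, and since both vectors lie in $[P_j(T)]^d$ we conclude $\nabla_w \phi = \Q_h \nabla \phi$ on $T$. There is no real obstacle here; the only point that requires care is the treatment of the boundary edges, where the homogeneous Dirichlet condition on $\phi$ must be invoked precisely to reconcile the two different definitions of $\{\cdot\}$ in Cases~1 and~2 with the actual trace of $\phi$.
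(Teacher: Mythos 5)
Your proposal is correct and follows essentially the same route as the paper: apply the definition \eqref{d-d}, use that the trace of $\phi\in H_0^1(\Omega)$ is single-valued on interior edges and vanishes on $\partial\Omega$ so that $\{\phi\}=\phi$ on $\partial T$ under both \eqref{avg1} and \eqref{avg2}, integrate by parts, and invoke the defining property of $\Q_h$. The only difference is cosmetic --- you run the chain of equalities in the opposite direction and spell out the edge-by-edge verification that the paper leaves implicit.
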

\begin{proof}
Using (\ref{d-d}) and  integration by parts, we have that for
any $\bq\in [P_{j}(T)]^d$
\begin{eqnarray*}
(\nabla_w \phi,\bq)_T &=& -(\phi,\nabla\cdot\bq)_T
+\langle \{\phi\},\bq\cdot\bn\rangle_{\pT}\\
&=& -(\phi,\nabla\cdot\bq)_T
+\langle \phi,\bq\cdot\bn\rangle_{\pT}\\
&=&(\nabla \phi,\bq)_T=(\Q_h\nabla\phi,\bq)_T,
\end{eqnarray*}
which implies the desired identity (\ref{key}).
\end{proof}

\section{Well Posedness}

We start this section by introducing a semi-norms $\3bar v\3bar$ and a norm  $\|v\|_{1,h}$
   for any $v\in V_h+H_0^1(\Omega)$ as follows:
\begin{eqnarray}
\3bar v\3bar^2 &=& \sum_{T\in\T_h}(\nabla_wv,\nabla_wv)_T, \label{norm2}\\
\|v\|_{1,h}^2&=&\sum_{T\in \T_h}\|\nabla v\|_T^2+\sum_{e\in\E_h}h_e^{-1}\|[v]\|_{e}^2.\label{norm3}
\end{eqnarray}

For any function $\varphi\in H^1(T)$, the following trace
inequality holds true (see \cite{wymix} for details):
\begin{equation}\label{trace}
\|\varphi\|_{e}^2 \leq C \left( h_T^{-1} \|\varphi\|_T^2 + h_T
\|\nabla \varphi\|_{T}^2\right).
\end{equation}

\begin{lemma}\label{poly} Let $T$ be a convex
     $(n+1)$-polygon/polyhedron of size $h_T$ with  edges/faces
   $e$, $e_1$, \dots, and $e_{n}$, satisfying minor angle and length conditions
    to be specified in the proof below.
 For a given polynomial $q_0\in P_k(e)$,  we define a polynomial $q\in P_{k+n}(T)$ by
\begin{align} \label{p1} q&=\lambda_1\cdots \lambda_n q_1, \quad \hbox{where $q_1\in P_k(T)$ satisfying}\\
       \label{p2}  \l q-q_0, p\r_e &=0   \quad\forall p\in P_k(e), \\
       \label{p3}  (q,p)_T& =0\quad\forall p\in P_{k-1}(T),
\end{align} where $\lambda_i\in P_1(T)$ vanishes on $e_i$ and assumes value 1 at
   the barycenter of $e$. Then it holds that
\begin{align} \label{p4} \|q\|_T &\le C h_T^{1/2} \|q_0\|_e,
\end{align} where the nonzero constant is defined in \eqref{p-b} below, independent of $T$ and $q_0$.
\end{lemma}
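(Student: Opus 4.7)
The plan is to prove \eqref{p4} by establishing well-posedness of the linear system \eqref{p1}--\eqref{p3} for $q_1$ and then obtaining the norm estimate via rescaling.

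First, I would observe that the system is square: the standard dimension identity $\dim P_k(T) = \dim P_k(e) + \dim P_{k-1}(T)$ in both 2D and 3D (e.g.\ $(k+1)(k+2)/2 = (k+1) + k(k+1)/2$) shows that \eqref{p2} and \eqref{p3} impose exactly $\dim P_k(T)$ scalar conditions on $q_1 \in P_k(T)$. To prove uniqueness, set $q_0 = 0$ and test \eqref{p2} with $p = q_1|_e \in P_k(e)$:
\[
\int_e \lambda_1\cdots\lambda_n\, q_1^2\, ds = 0.
\]
By convexity of $T$ and the normalization $\lambda_i(\bar e)=1$, each $\lambda_i$ is nonnegative on $T$ and strictly positive in its interior; in particular $\lambda_1\cdots\lambda_n$ is strictly positive on the relative interior of $e$, forcing $q_1|_e = 0$. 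Write $q_1 = \lambda_e \tilde q_1$ with $\tilde q_1 \in P_{k-1}(T)$ and $\lambda_e \in P_1(T)$ vanishing on $e$ and positive on the interior of $T$. Testing \eqref{p3} with $p = \tilde q_1$ then yields
\[
\int_T \lambda_1\cdots\lambda_n\, \lambda_e\, \tilde q_1^2\, d\bx = 0,
\]
which forces $\tilde q_1 = 0$ and hence $q_1 = 0$. Existence follows from uniqueness for a square system.

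For the stability bound, I would map $T$ affinely to a reference polytope $\hat T$ of unit diameter. Since each $\lambda_i$ is pinned to value 1 at a point, it is dimensionless and transfers to $\hat\lambda_i$ without rescaling. On $\hat T$ the solution map $\hat q_0 \mapsto \hat q$ is a bounded linear operator between finite-dimensional spaces, so $\|\hat q\|_{\hat T} \leq \hat C \|\hat q_0\|_{\hat e}$ with $\hat C$ depending only on the shape of $\hat T$. The standard scalings $\|q\|_T^2 = h_T^{\,d} \|\hat q\|_{\hat T}^2$ and $\|q_0\|_e^2 = h_T^{\,d-1}\|\hat q_0\|_{\hat e}^2$ then give $\|q\|_T \le C h_T^{1/2}\|q_0\|_e$, which is \eqref{p4}.

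The main obstacle is uniformity of the constant $\hat C$ across polytopal elements of varying combinatorial type (different $n$, edge counts, face incidence patterns). Since no single reference element suffices, one must use the ``minor angle and length conditions'' promised in the statement — bounds on aspect ratios, minimum angles, and relative face sizes — to keep $\hat C$ bounded as $T$ ranges over the admissible class. The statement's reference to an explicit constant suggests the authors intend to derive an explicit, geometry-dependent lower bound on the smallest singular value of the $q_1$-system rather than invoke a soft compactness argument; isolating the right shape parameters is the technical crux.
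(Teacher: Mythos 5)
Your well-posedness argument is exactly the paper's: the dimension count $\dim P_k(T)=\dim P_k(e)+\dim P_{k-1}(T)$ makes the system square, and the two-step uniqueness proof (test \eqref{p2} with $q_1|_e$ to force $q_1=\lambda_0q_2$ with $\lambda_0$ vanishing on $e$, then test \eqref{p3} with $q_2$) is verbatim what the authors do. The scaling heuristic also correctly identifies where the factor $h_T^{1/2}$ in \eqref{p4} comes from.

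The gap is in the stability bound, and it is the essential one. Mapping to a reference polytope and invoking boundedness of a linear map between finite-dimensional spaces yields a constant $\hat C(\hat T)$ depending on the shape of $\hat T$; since the admissible polytopes form infinitely many combinatorial types and a non-compact family of shapes within each type, no uniform constant follows from this soft argument. You acknowledge this yourself (``isolating the right shape parameters is the technical crux'') but then stop --- and that crux \emph{is} the lemma, since the ``minor angle and length conditions'' are not given in the statement but are supposed to emerge from the proof. The paper resolves it by an entirely explicit computation: under the hypotheses $h_e\ge h_T/4$ and $\sin\alpha_0\ge\sqrt d/8$ it derives two-sided bounds on the weights, namely $\lambda_1\cdots\lambda_n\ge 2^{-n}$ on a fixed small square/cube $T_0$ of size $h_e/16$ at the barycenter of $e$, and $\lambda_0\lambda_1\cdots\lambda_n\le 8^n/\sin^n\alpha_0$ on $T$ (display \eqref{bound}); it then converts the identity $\langle\lambda_1\cdots\lambda_n\tilde q_1,\tilde q_1\rangle_e=\langle q_0,\tilde q_1\rangle_e$ into $\|\tilde q_1\|_e\le C\|q_0\|_e$ via the equivalence of $L^2$ norms of degree-$k$ polynomials on $e_0\subset e$ with explicit constant $16^{2k}$; it extends $\tilde q_1$ constantly in the direction normal to $e$ to obtain $\|\tilde q_1\|_T\le Ch_T^{1/2}\|\tilde q_1\|_e$; and it controls the remainder $q_2$ from \eqref{p3} by the same device on $T_0$ and its top half, arriving at the explicit constant \eqref{p-b}. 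Without some version of these quantitative weighted polynomial inverse estimates your proof does not close; the reference-element reduction by itself establishes the bound only for a single fixed shape.
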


\def\a#1{\begin{align*}#1\end{align*}}\def\an#1{\begin{align}#1\end{align}}

\begin{proof}   First the linear system \eqref{p2}--\eqref{p3} of equation is square,  of size $\dim P_k$.
To show its existence and uniqueness of solution,  we need only to show the uniqueness.
Let $q_0=0$ and $p=q_1$ in \eqref{p2}.  It follows that $q_1\equiv 0$ on $e$ and
    $q_1=\lambda_0q_2$ for some $q_2\in P_{k-1}(T)$ because the weight is
    positive in the weighted $L^2(e)$ inner product.
  Here $\lambda_0\in P_1(T)$,  $\lambda_0|_e=0$,  and $\max_T \lambda_0 = 1$.
Next letting $p=q_2$ in \eqref{p3},  due to a positive weight $\prod_{i=0}^n \lambda_i$ on $T^0$,
   we have $q_2=0$.

If $e_i$ is a neighboring edge/face of $e$,  then
\begin{align*} \lambda_i|_e &=\frac {2  } { h_e} x
\end{align*} where  $h_e$ is the doubled distance from the barycenter of $e$
     to $e_i$ along/on $e$ and
     $x$ is the distance from a point on $e$ to $e_i$ along (2D) or on (3D) $e$.
   For simplicity,  we assume this $h_e$ is also  the size of $e$ (it is indeed in 2D).
To avoid too many constants,  we assume $h_e\ge h_T/4$.
Then
\begin{align}\label{l-i}
    \max_{T} \lambda_i&=\frac{h_{\perp e_i}(T) } { (h_e/2)\sin\alpha_i }
    \le \frac{h_T} { (h_e/2)\sin\alpha_i } \le \frac 8{  \sin\alpha_i }
      \le \frac 8{  \sin\alpha_0 },
\end{align} where $\pi-\alpha_i$ (for some $\alpha_i\ge \alpha_0>0$
   and $\alpha_i\le \pi-\alpha_0$) is the angle
   between $e$ and $e_i$,  $h_{\perp e_i}(T)$ is the maximal distance of points on $T$
   to $e_i$ in the direction orthogonal to $e_i$.
Let $e_1$,\dots, $e_m$ are all the neighboring edges/faces of $e$, $m=2$ in 2D, and $m\le n$.
For a lower bound,  we have
\an{ \label{p-1} \lambda_i|_{T_0}
        & \ge \begin{cases}  \frac {15}{16}  & \hbox{if } \alpha_i \le \pi/2,\\
            1-\frac{\sqrt d}{16 \sin \alpha_i}
          \ge \frac12  & \hbox{if } \alpha_i > \pi/2, \end{cases}
   } where $T_0$ is a square/cube at middle of $e$ with size $h_{e}/16$, cf. Figure \ref{s-ball}.
We note that other than triangles, $\alpha_i\le \pi/2$ for most other polygons.
Here in \eqref{p-1},  we assumed $\sin\alpha_0 \ge \sqrt{d}/8$, where $d$ is the space
  dimension, 2 or 3.

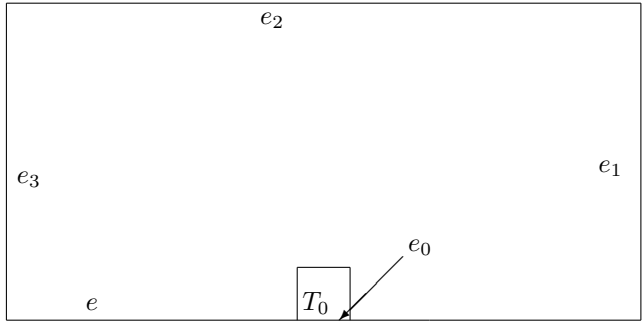
\begin{figure}[h!]
 \begin{center} \setlength\unitlength{2pt}
\begin{picture}(120,60)(0,0)
 \put(0,0){\line(1,0){80}}\put(15,2){$e$}
 \put(55,0){\line(0,1){10}}\put(56,2){$T_0$}
 \put(75,12){\vector(-1,-1){12}} \put(76,13){$e_0$}
 \put(55,10){\line(1,0){10}}
 \put(65,10){\line(0,-1){10}}
 \put(80,0){\line(1,0){40}}
 \put(120,0){\line(0,1){60}}\put(112,28){$e_1$}
 \put(0,60){\line(1,0){120}}\put(48,56){$e_2$}
 \put(0,0){\line(0,1){60}}\put(2,26){$e_3$}
 \end{picture}\end{center}
\caption{Size $|e_0|=|e|/16=e_h/8$, and $T_0$ is square of size $|e_0|$. }\label{s-ball}
\end{figure}

For non-neighboring edges $e_j$,  we have
\a{ \lambda_j|_{e_1} = \begin{cases} 1 & \hbox{if } e_j \parallel e_1,\\
           \frac {2(x + x_j)}{h_{e_1}+x_j}  & \hbox{otherwise, }\end{cases}
     }
where $x$ is the arc-length parametrization on $e$ toward the extended intersection of
  $e$ and $e_i$,  $x_j$ is the distance on $e$ from the an boundary point of $e$ to
   the intersection.
Supposing $e_i$ is the only edge/polygonal between $e$
  and $e_j$,  $x_j=h_{e_i}(\cos \alpha_i-\cos(\alpha_i+\alpha_j))$.
Because $x_j\ge 0$,  it follows that
\an{\label{l21} \max_T \lambda_j = \frac{h_{\perp e_j}(T) }{ (h_{e}/2)\sin \alpha_i}
      \le \frac{2 h_T }{ ( h_{e}+x_j) \sin (\alpha_i+\alpha_j)}
      \le \frac 8 {   \sin \alpha_0}. }
For a lower bound, because $x_j>0$ and $e_i$ is an edge/polygon in between,  we have
\an{\label{l22}  \lambda_j|_{T_0}
        & \ge  \lambda_i|_{T_0} \ge \frac 12.  }
Together,  we have, noting $\lambda_0|_T\le 1$,
\an{\label{bound} \lambda_1 \cdots\lambda_n |_{T_0} \ge \frac{1} {2^{ n }},
     \quad\hbox{ and } \
   \lambda_0 \lambda_1\cdots\lambda_n |_T \le \frac{8^{ n }} {\sin^{ n } \alpha_0}.  }

Let $\tilde q_1\in P_k(e)$ be the solution in \eqref{p2}.
Letting $\tilde p=q_1$ in \eqref{p2}, by \eqref{bound},  we get
\a{ \frac 1{16^{2k}} \frac 1{2^{n}}  \|\tilde  q_1\|_e^2
            &\le  \frac 1{2^{n}}  \| \tilde q_1\|_{e_0} ^2
             \le
              \l \lambda_1\cdots \lambda_n \tilde q_1,\tilde q_1 \r_e \\
                 & =  \l  q_0,\tilde q_1 \r_e \le \| q_0\|_0 \| \tilde q_1\|_0,
   }
where in the first step we use the fact $ q_1$ is a degree $k$ polynomial.
We view $\tilde q_1\in P_k(e)$  as defined on the whole line/plane passing through $e$.
We extend this polynomial to a polynomial $\tilde q_1$ in $P_k(\mathbb{R}^d)$,
    by letting it be constant in the direction orthogonal to $e$.
In particular, we have, as $T\subset S_T$ and $e\subset S_e$,
\a{ \|\tilde q_1\|_T^2 & \le \|\tilde q_1\|_{S_T}^2= h_T \|\tilde q_1\|_{S_e}^2
    \le \left(\frac {h_T}{h_e}\right)^{2k}
     h_T \|\tilde q_1\|_{e}^2 \\
    & \le 4^{2k} h_T \|\tilde q_1\|_{e}^2
  \le  2^{4k} h_T ( 2^{8k+n} \|q_0\|_{e}) ^2 ,
 } where $S_T$ is a square/cube of size $h_T$ containing $T$, with one side $S_e$ which
   contains $e$.

Rewriting \eqref{p1} in terms of this extended $\tilde q_1$, we have
\begin{align*} q=\lambda_1\cdots \lambda_n (\lambda_0 q_2+   \tilde q_1)
\end{align*} for some $q_2\in P_{k-1}(T)$.
Letting $p=q_2$ in \eqref{p3},   by \eqref{bound},  we have
\begin{align*}
  \|q_2\|_T^2 &\le \left( {h_T}/{h_{e_0}}\right)^{2k-2}  \|q_2\|_{T_0}^2
     \le 64^{2k-2} \frac{8^{ n }} {\sin^{ n } \alpha_0}  (\lambda_1 \cdots \lambda_n  q_2,  q_2)_{T_0}
     \\&\le  \frac{2^{ 3n+12k-12 }} {\sin^{ n } \alpha_0}
          \frac {2 h_T} {h_{e_0}}   (\lambda_1 \cdots \lambda_n \lambda_0 q_2,  q_2)_{T_{0,0}}
     \\&\le  \frac{2^{ 3n+12k-5 }} {\sin^{ n } \alpha_0}
              (\lambda_1 \cdots \lambda_n \lambda_0 q_2,  q_2)_{T }\\
      & =\frac{2^{ 3n+12k-5 }} {\sin^{ n } \alpha_0}  (\lambda_1 \cdots \lambda_n \tilde q_1,-q_2)_{T}
   \\ & \le \frac{2^{ 3n+12k-5 }} {\sin^{ n } \alpha_0} 2^n \|\tilde q_1\|_T \|q_2\|_T,
\end{align*} where $T_{0,0}$ is the top half of $T_0$,  cf. Figure \ref{s-ball}.
Then,
\begin{align*}
  \|q\|_T^2 & =(\lambda_1^2\cdots \lambda_n^2 (\lambda_0 q_2-\tilde q_1), (\lambda_0 q_2-\tilde q_1))_T
     \\& \le \frac{8^{ 2n }} {\sin^{ 2n } \alpha_0}
      ( (\lambda_0 q_2-\tilde q_1), (\lambda_0 q_2-\tilde q_1))_T
     \\& \le \frac{8^{ 2n }} {\sin^{ 2n } \alpha_0} 2  (\| \lambda_0 q_2\|_T^2 + \|\tilde q_1\|_T^2 )
   \\ & \le  \frac{2^{ 6n+1 }} {\sin^{ 2n } \alpha_0} (\| q_2\|_T^2 + \|\tilde q_1\|_T^2 ) ,
\end{align*} where  $\lambda_0\le 1$ on $T$.
Finally, combining above three bounds,  we get
\an{  \label{p-b} \begin{aligned}
  \|q\|_T &  \le \frac{2^{ 3n+1/2 }} {\sin^{ n } \alpha_0}
            \left( (\frac{2^{4n+12k-5 }} {\sin^{ n } \alpha_0} )^2+1 \right)^{\frac 12}
               \|\tilde  q_1 \|_T \\
         &\le  \frac{2^{10k+ 4n+1/2 }} {\sin^{ n } \alpha_0}
            \left( (\frac{2^{4n+12k-5 }} {\sin^{ n } \alpha_0} )^2+1 \right)^{\frac 12}
  h_T^{ 1/2} \|q_0\|_{e} \\
   &=: C h_T^{ 1/2} \|q_0\|_{e}.
\end{aligned}  }
   The proof is completed.
\end{proof}

\begin{lemma} There exist two positive constants $C_1$ and $C_2$ independent of mesh size $h$ such
that for any $v\in V_h$, we have
\begin{equation}\label{happy}
C_1 \|v\|_{1,h}\le \3bar v\3bar \leq C_2 \|v\|_{1,h}.
\end{equation}
\end{lemma}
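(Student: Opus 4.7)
The plan is to prove the two directions separately, using the same master identity obtained from integration by parts applied to the definition \eqref{d-d}. For any $\bq\in[P_j(T)]^d$ and any $v\in V_h$, integration by parts in the right-hand side of \eqref{d-d} gives
\begin{equation*}
(\nabla_w v,\bq)_T \;=\; (\nabla v,\bq)_T \;-\; \langle v-\{v\},\bq\cdot\bn\rangle_{\partial T}.
\end{equation*}
Note that on an interior edge $v-\{v\}=\pm\tfrac12[v]$, on a boundary edge in Case 1 both sides vanish because $v|_{\partial\Omega}=0$, and on a boundary edge in Case 2 we have $v-\{v\}=v=[v]$. In every case $\|v-\{v\}\|_e\le\|[v]\|_e$.

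For the upper bound $\3bar v\3bar\le C_2\|v\|_{1,h}$, I would take $\bq=\nabla_w v$ in the identity above, apply Cauchy--Schwarz on each element, and bound $\|\nabla_w v\cdot\bn\|_e$ by the standard polynomial inverse trace inequality $\|\nabla_w v\cdot\bn\|_e\le Ch_T^{-1/2}\|\nabla_w v\|_T$. Summing over $T\in\T_h$, using $h_e\simeq h_T$ on each edge of $T$, and then Cauchy--Schwarz across the sums yields $\3bar v\3bar^2\le C\|v\|_{1,h}\,\3bar v\3bar$, from which the bound follows.

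For the lower bound $C_1\|v\|_{1,h}\le\3bar v\3bar$, the jump contribution is the delicate part and is exactly what Lemma~\ref{poly} was designed for. Fix an element $T$ with edges $e,e_1,\dotsc,e_n$, apply Lemma~\ref{poly} with $q_0=[v]|_e\in P_k(e)$ to obtain $q\in P_{k+n}(T)$, and then test \eqref{d-d} against the vector polynomial $\bq=q\,\bn_e$ (the normal $\bn_e$ is constant, so $\bq\in[P_{k+n}(T)]^d$, requiring $j\ge k+n$). Integrating by parts inside the defining identity, the volume piece becomes $(\bn_e\cdot\nabla v,q)_T$ with $\bn_e\cdot\nabla v\in P_{k-1}(T)$, which vanishes by \eqref{p3}. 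Since $q$ vanishes on every $e_i$ because of the $\lambda_i$ factors, only the contribution from $e$ survives, and \eqref{p2} gives $\langle[v],q\rangle_e=\langle[v],q_0\rangle_e=\|[v]\|_e^2$. Combining with $\|\bq\|_T=\|q\|_T\le Ch_T^{1/2}\|[v]\|_e$ from \eqref{p4} produces
\begin{equation*}
\tfrac12\|[v]\|_e^2\;\le\;|(\nabla_w v,\bq)_T|\;\le\; Ch_T^{1/2}\|\nabla_w v\|_T\|[v]\|_e,
\end{equation*}
whence $h_e^{-1}\|[v]\|_e^2\le C\|\nabla_w v\|_T^2$. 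The gradient part is then easy: take $\bq=\nabla v\in[P_{k-1}(T)]^d$ in the master identity to get $\|\nabla v\|_T^2=(\nabla_w v,\nabla v)_T+\langle v-\{v\},\nabla v\cdot\bn\rangle_{\partial T}$, bound the boundary term with polynomial inverse trace and the jump estimate just proved, and absorb $\|\nabla v\|_T$ on the left.

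The hard step is plainly the jump estimate in the lower bound; every other ingredient is a routine trace/inverse-trace calculation plus Cauchy--Schwarz. The reason Lemma~\ref{poly} is indispensable is that on a general polytope we need a test vector field in $[P_j(T)]^d$ whose normal component reproduces $[v]$ on a single chosen edge, vanishes on all other edges, and whose divergence pairs trivially with any $P_k$ function in the interior — exactly the three conditions built into \eqref{p1}--\eqref{p3} and controlled in $L^2(T)$ by \eqref{p4}.
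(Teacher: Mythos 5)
Your proposal is correct and follows essentially the same route as the paper: the same integration-by-parts identity $(\nabla_w v,\bq)_T=(\nabla v,\bq)_T-\l v-\{v\},\bq\cdot\bn\r_\pT$ with $\bq=\nabla_w v$ for the upper bound, and Lemma~\ref{poly} to build an edge-by-edge test function controlling the jumps for the lower bound, followed by the same absorption argument for $\|\nabla v\|_T$. The only (immaterial) differences are that you normalize the data as $q_0=[v]|_e$ rather than $\{v\}-v$ and take $\bq=q\,\bn_e$ rather than the paper's $\bq_0=(q/n_1,0,\dots,0)$; both choices satisfy the three conditions in \eqref{2e} and the bound \eqref{i-b}.
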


\medskip

\begin{proof}
For any $v\in V_h$, it follows from the definition of
weak gradient (\ref{d-d}) and integration by parts that for all $\bq\in [P_{j}(T)]^d$
\begin{eqnarray}
(\nabla_wv,\bq)_T&=&-(v, \nabla\cdot\bq)_T+\l \{v\},
\bq\cdot\bn\r_\pT\nonumber\\
&=&(\nabla v,\bq)_T-\l v-\{v\}, \bq\cdot\bn\r_\pT. \label{n-1}
\end{eqnarray}
By letting $\bq=\nabla_w v$ in (\ref{n-1}) we arrive at
\begin{eqnarray*}
(\nabla_wv,\nabla_w v)_T=(\nabla v,\nabla_w v)_T-\l v-\{v\},
\nabla_w v\cdot\bn\r_\pT.
\end{eqnarray*}
It is easy to see that the following equations hold true for $\{v\}$ defined in both (\ref{avg1}) and (\ref{avg2}) on $T$ with $e\subset\pT$,
\begin{equation}\label{jp}
\|v-\{v\}\|_e=\|[v]\|_e\quad {\rm if} \;e\subset \partial\Omega,\quad\|v-\{v\}\|_e=\frac12\|[v]\|_e\;\; {\rm if} \;e\in\E_h^0.
\end{equation}
From (\ref{jp}), (\ref{trace}) and the inverse inequality
we have
\begin{eqnarray*}
\|\nabla_wv\|^2_T &\le& \|\nabla v\|_T \|\nabla_w v\|_T+\|
v-\{v\}\|_\pT \|\nabla_w v\|_\pT\\
&\le& \|\nabla v\|_T \|\nabla_w v\|_T+ Ch_T^{-1/2}\|
v-\{v\}\|_\pT \|\nabla_w v\|_T\\
&\le& \|\nabla v\|_T \|\nabla_w v\|_T+ Ch_T^{-1/2}\|[v]\|_{\pT} \|\nabla_w v\|_T
\end{eqnarray*}
which implies
$$
\|\nabla_w v\|_T \le C \left(\|\nabla v\|_T +Ch_T^{-1/2}\|[v]\|_{\pT}\right),
$$
and consequently
$$\3bar v\3bar \leq C_2 \|v\|_{1,h}.$$

Next we will prove $C_1 \|v\|_{1,h}\le \3bar v\3bar $.
For $v\in V_h$ and $\bq\in [P_j(T)]^d$, by (\ref{d-d}) and integration by parts, we have
\begin{equation}\label{n2}
(\nabla_w v,\bq)_T=(\nabla v,\bq)_T+\l \{v\}-v, \bq\cdot\bn\r_\pT.
\end{equation}
We like to find  $\bq_0\in [P_j(T)]^d$ such that,
\begin{equation}\label{2e}
(\nabla v,\bq_0)_T=0,\quad \l \{v\}-v,\bq_0\cdot\bn\r_{\pT\setminus e}=0,
   \ \hbox{ and  } \ \l \{v\}-v,\bq_0\cdot\bn\r_e = \|\{v\}-v\|_e^2,
\end{equation}
and
\begin{align}\label{i-b}
\| \bq_0\|_{ T} \le C h_T^{1/2} \| \{v\}-v  \|_{ {e}}.
\end{align}
Letting $q_0= \{v\}-v$ in \eqref{p2}, there exists  a $q\in P_{n+k-1}(T)$ (i.e. $j=n+k-1$)
  such that \eqref{p2}--\eqref{p4} hold, where $n$ is the number of the edges/faces on a polygon/polyhadron.
Without loss of generality,  let $\bn=\l n_1, \cdots, n_d\r$ for some $n_1\ne 0$.
We then let $\bq_0=\l q/n_1, 0, \cdots, 0 \r$,  which satisfies \eqref{2e} and  \eqref{i-b} by Lemma \ref{poly}.
Substituting  $\bq_0$ into (\ref{n2}), we get
\begin{equation}\label{n3}
(\nabla_wv,\bq_0)_T=\|\{v\}-v\|_e^2.
\end{equation}
It follows from Cauchy-Schwarz inequality that
\[
\|\{v\}-v\|_e^2\le C\|\nabla_w v\|_T\|\bq_0\|_T\le Ch_T^{1/2}\|\nabla_w v\|_T\|\{v\}-v \|_e,
\]
which gives
\begin{equation}\label{n4}
h_T^{-1/2}\|\{v\}-v\|_\pT\le C\|\nabla_w v\|_T.
\end{equation}
Using (\ref{jp}) and summing the both sides of (\ref{n4}) over $T$, we obtain
\begin{equation}\label{nn4}
\sum_{e\in\E_h}h_e^{-1}\|[v]\|_e^2\le C\3bar v\3bar^2.
\end{equation}
It follows from the trace inequality, the inverse inequality and (\ref{n4}),
$$
\|\nabla v\|_T^2 \leq \|\nabla_w v\|_T \|\nabla v\|_T
+Ch_T^{-1/2}\| \{v\}-v\|_\pT \|\nabla v\|_T\le C\|\nabla_w v\|_T \|\nabla v\|_T,
$$
which implies
\begin{equation}\label{nn5}
\sum_{T\in\T_h}\|\nabla v\|_T^2\le C\3bar v\3bar^2.
\end{equation}

Combining (\ref{nn4}) and (\ref{nn5}),
 we prove  the lower bound of (\ref{happy}) and complete the proof of the lemma.
\end{proof}

\section{Error Estimates in Energy Norm}

We start this section by defining some approximation operators. 
We will call any element $T\in\T_h$, that has one or two edges on $\partial\Omega$,  boundary element in 2D. Then we will define $I_hu$, an interpolation of $u$, on boundary elements.  $I_hu$ for 3D can be constructed in a similar fashion. For a boundary element $T$, let $T_0\subset T$ be a triangle such that $\pT\cap\partial\Omega = \pT_0\cap\partial\Omega$. Let $I_hu$ be $k$th order interpolation of $u$ on $T_0$.

\begin{lemma}
For any  boundary element $T\in\T_h$,  one has
\begin{equation}\label{approx}
\|u-I_hu\|_T +h_T\|\nabla (u-I_hu)\|_T\le Ch^{k+1}|u|_{k+1,T}.
\end{equation}
\end{lemma}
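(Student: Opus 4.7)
The plan is to combine the classical Lagrange interpolation error estimate on the auxiliary sub-triangle $T_0$ with a polynomial-norm equivalence that transports the bound from $T_0$ to the whole boundary element $T$. Since $I_h u\in P_k(T_0)$ is the $k$-th order Lagrange interpolant of $u$ on a shape-regular triangle $T_0\subset T$ of size $h_{T_0}\sim h_T$, classical finite element approximation theory yields
\begin{equation*}
\|u-I_hu\|_{T_0}+h_T\|\nabla(u-I_hu)\|_{T_0}\le Ch^{k+1}|u|_{k+1,T_0}\le Ch^{k+1}|u|_{k+1,T},
\end{equation*}
which handles the restriction to $T_0$.

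To extend from $T_0$ to $T$, I would introduce an auxiliary polynomial $\pi u\in P_k(T)$ on the whole element $T$, constructed by a Bramble--Hilbert-type argument (for example an averaged Taylor polynomial over a ball on which $T$ is star-shaped, or the $L^2$ projection onto $P_k(T)$), satisfying
\begin{equation*}
\|u-\pi u\|_T+h_T\|\nabla(u-\pi u)\|_T\le Ch^{k+1}|u|_{k+1,T}.
\end{equation*}
The difference $\pi u-I_hu$ then lies in $P_k$ and is well defined on all of $T$. Using the triangle inequality
\begin{equation*}
\|u-I_hu\|_T\le \|u-\pi u\|_T+\|\pi u-I_hu\|_T,
\end{equation*}
the second term is treated by the polynomial-norm equivalence $\|p\|_T\le C\|p\|_{T_0}$ for $p\in P_k$, combined with
\begin{equation*}
\|\pi u-I_hu\|_{T_0}\le \|u-\pi u\|_{T_0}+\|u-I_hu\|_{T_0},
\end{equation*}
and the two error bounds already established. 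The gradient estimate follows by exactly the same three-step pattern, observing that $\nabla(\pi u-I_hu)$ is a vector of polynomials of degree at most $k-1$, so an analogous polynomial-norm equivalence is available with the same scaling.

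The main obstacle is establishing the polynomial-norm equivalence $\|p\|_T\le C\|p\|_{T_0}$ with a constant uniform in $h$ and in the element. This is precisely where the specific construction of $T_0\subset T$ with $\pT_0\cap\partial\Omega=\pT\cap\partial\Omega$ and the shape regularity hypotheses on $\T_h$ are essential: they must yield $h_{T_0}\sim h_T$ together with a uniform lower bound on $|T_0|/|T|$, so that a standard scaling argument on a reference configuration delivers the desired constant. Given that shape regularity, the rest is a direct assembly of the three bounds above.
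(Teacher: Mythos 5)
Your proposal is correct and follows essentially the same route as the paper: the paper also combines the interpolation estimate on $T_0$, the $L^2$ projection $Q_0$ onto $P_k(T)$ (your $\pi u$) with its Bramble--Hilbert bound, a triangle inequality, and the polynomial-norm equivalence $\|p\|_T\le C\|p\|_{T_0}$ (cited there as a ``domain inverse inequality'' under mesh regularity assumptions). Your remarks on the gradient term and on where shape regularity enters match the paper's (more terse) treatment.
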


\begin{proof}
For any boundary element $T\in\T_h$, by the construction of $I_hu$, one has
\begin{equation}\label{a1}
\|u-I_hu\|_{T_0} +h_T\|\nabla (u-I_hu)\|_{T_0}\le Ch^{k+1}|u|_{k+1,T_0}.
\end{equation}
Let $Q_0$ be the $L^2$ projection onto $P_k(T)$. The following estimate holds \cite{mwy-se}
\begin{equation}\label{a2}
\|u-Q_0u\|_T +h_T\|\nabla (u-Q_0u)\|_T\le Ch^{k+1}|u|_{k+1,T}.
\end{equation}
By the triangle inequality, then
\begin{equation}\label{a3}
\|u-I_hu\|_T\le  \|u-Q_0u\|_T+\|Q_0u-I_hu\|_T.
\end{equation}
By the domain inverse inequality \cite{mwwy, mwy-biharm}
   and under necessary regularity assumption of the mesh $\T_h$, we have
\begin{equation}\label{a4}
\|Q_0u-I_hu\|_T \le C\|Q_0u-I_hu\|_{T_0}\le C (\|Q_0u-u\|_{T_0}+\|u-I_hu\|_{T_0}).
\end{equation}
Combining (\ref{a1})-(\ref{a4}) yields
\[
\|u-I_hu\|_{T}\le Ch^{k+1}|u|_{k+1,T}.
\]
Similarly, we can prove the second part of the estimate in (\ref{approx}) and finish the proof of the lemma.
\end{proof}

Now we define $Q_hu\in V_h$, an approximation of $u$ for the two finite element methods associated with Case 1 and Case 2. For the method associated with Case 1, let $Q_hu=Q_0u$ for any $T$ which is not boundary element and $Q_hu=I_hu$ for the boundary element $T$.
For the case 2, define $Q_hu=Q_0u$ for all $T\in\T_h$.

Let $e_h=u-u_h$ and $\epsilon_h=Q_hu-u_h\in V_h$. Next we derive an error equation that $e_h$ satisfies.

\begin{lemma}
For any $v\in V_h$, one has,
\begin{eqnarray}
(\nabla_we_h,\nabla_wv)_{\T_h}=\ell(u,v),\label{ee}
\end{eqnarray}
where
\begin{eqnarray*}
\ell(u,v)&=& \langle (\nabla u-\Q_h\nabla u)\cdot\bn,v-\{v\}\rangle_{\pT_h}.
\end{eqnarray*}
\end{lemma}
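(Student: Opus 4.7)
The plan is to write $e_h = u - u_h$ and use linearity: $(\nabla_w e_h,\nabla_w v)_{\T_h} = (\nabla_w u,\nabla_w v)_{\T_h} - (\nabla_w u_h,\nabla_w v)_{\T_h}$. Since $u_h$ solves the scheme (\ref{mwg}), the second term is simply $(f,v)$. So everything reduces to expanding $(\nabla_w u,\nabla_w v)_{\T_h}$ into $(f,v)$ plus the boundary residual $\ell(u,v)$.

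The key tool for this expansion is the identity $\nabla_w u = \Q_h \nabla u$ from Lemma (\ref{key}), which applies because $u \in H_0^1(\Omega)$. Together with identity (\ref{n-1}), applied with $\bq = \Q_h\nabla u \in [P_j(T)]^d$, this gives element-by-element
\begin{equation*}
(\nabla_w u,\nabla_w v)_T = (\Q_h\nabla u,\nabla_w v)_T = (\nabla v,\Q_h\nabla u)_T - \langle v-\{v\},\Q_h\nabla u\cdot\bn\rangle_{\pT}.
\end{equation*}
Using the $L^2$-projection property $(\nabla v,\Q_h\nabla u)_T = (\nabla v,\nabla u)_T$ and then ordinary integration by parts on $T$ together with $-\Delta u = f$, I would rewrite the first term as $(f,v)_T + \langle \nabla u\cdot\bn,v\rangle_{\pT}$. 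Summing over $T\in\T_h$ then produces
\begin{equation*}
(\nabla_w u,\nabla_w v)_{\T_h} = (f,v) + \langle \nabla u\cdot\bn,v\rangle_{\pT_h} - \langle v-\{v\},\Q_h\nabla u\cdot\bn\rangle_{\pT_h}.
\end{equation*}

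The only step that requires a little care is showing $\langle \nabla u\cdot\bn,v\rangle_{\pT_h} = \langle \nabla u\cdot\bn, v-\{v\}\rangle_{\pT_h}$, which will let me combine the two boundary sums into $\ell(u,v)$. On an interior edge $e\in\E_h^0$, since $u$ is sufficiently regular the normal flux $\nabla u\cdot\bn$ is single-valued, and the contribution from the two adjacent elements reduces in both cases to $\langle\nabla u\cdot\bn,[v]\rangle_e$ on either formulation. On a boundary edge, in Case 1 we have $v=0$ on $\partial\Omega$ (so both sides vanish), and in Case 2 the definition $\{v\}=0$ gives $v-\{v\}=v$ (so both sides agree). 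Hence the identity holds uniformly for the two formulations.

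The main conceptual obstacle is precisely this unified verification at the boundary: the two different definitions (\ref{avg1}) and (\ref{avg2}) of $\{v\}$ must both be compatible with the single error equation (\ref{ee}). Once this is checked, combining the boundary terms yields
\begin{equation*}
(\nabla_w u,\nabla_w v)_{\T_h} - (f,v) = \langle (\nabla u-\Q_h\nabla u)\cdot\bn, v-\{v\}\rangle_{\pT_h} = \ell(u,v),
\end{equation*}
and subtracting $(f,v) = (\nabla_w u_h,\nabla_w v)_{\T_h}$ finishes the proof.
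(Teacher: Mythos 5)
Your proof is correct and follows essentially the same route as the paper: both rest on the identity $\nabla_w u=\Q_h\nabla u$ from (\ref{key}), the integration-by-parts form (\ref{n-1}) of the weak gradient, and the cancellation $\langle\nabla u\cdot\bn,\{v\}\rangle_{\pT_h}=0$ (which the paper asserts and you verify for both averaging conventions). The only difference is organizational — you expand $(\nabla_w u,\nabla_w v)_{\T_h}$ directly rather than first deriving (\ref{m1}) from the PDE and then (\ref{j1}) separately — so the two arguments are the same computation in a different order.
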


\begin{proof}
Testing (\ref{pde}) by  any $v\in V_h$  and using integration by parts and the fact that
$\sum_{T\in\T_h}\langle \nabla u\cdot\bn, \{v\}\rangle_\pT=0$ for $\{v\}$ defined in both (\ref{avg1}) and (\ref{avg2}),  we arrive at
\begin{equation}\label{m1}
(\nabla u,\nabla v)_{\T_h}- \langle
\nabla u\cdot\bn,v-\{v\}\rangle_{\pT_h}=(f,v).
\end{equation}

It follows from integration by parts, (\ref{d-d}) and (\ref{key})  that
\begin{eqnarray}
(\nabla u,\nabla v)_{\T_h}&=&(\Q_h\nabla  u,\nabla v)_{\T_h}\nonumber\\
&=&-(v,\nabla\cdot (\Q_h\nabla u))_{\T_h}+\langle v, \Q_h\nabla u\cdot\bn\rangle_{\partial\T_h}\nonumber\\
&=&(\Q_h\nabla u, \nabla_w v)_{\T_h}+\langle v-\{v\},\Q_h\nabla u\cdot\bn\rangle_{\partial\T_h}\nonumber\\
&=&( \nabla_w u, \nabla_w v)_{\T_h}+\langle v-\{v\},\Q_h\nabla u\cdot\bn\rangle_{\partial\T_h}.\label{j1}
\end{eqnarray}
Combining (\ref{m1}) and (\ref{j1}) gives
\begin{eqnarray}
(\nabla_w u,\nabla_w v)_{\T_h}&=&(f,v)+\ell(u,v).\label{j2}
\end{eqnarray}
The error equation follows from subtracting (\ref{mwg}) from (\ref{j2}),
\begin{eqnarray*}
(\nabla_we_h,\nabla_wv)_{\T_h}=\ell(u,v)\quad \forall v\in V_h.
\end{eqnarray*}
This completes the proof of the lemma.
\end{proof}

\begin{lemma} For any $w\in H^{k+1}(\Omega)$ and $v\in V_h$, we have
\begin{eqnarray}
|\ell(w, v)|&\le&
Ch^{k}|w|_{k+1}\3bar v\3bar.\label{mmm1}
\end{eqnarray}
\end{lemma}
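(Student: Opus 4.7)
The plan is to bound $\ell(w,v)$ edge-by-edge using Cauchy-Schwarz in $L^2(e)$, then invoke (i) standard approximation/trace estimates for the projection error $\nabla w-\Q_h\nabla w$ and (ii) the norm equivalence (\ref{happy}) together with the jump identity (\ref{jp}) to absorb the $v-\{v\}$ factor into $\3bar v\3bar$.

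First I would write
\[
|\ell(w,v)|\le \sum_{T\in\T_h}\sum_{e\subset\partial T}\|(\nabla w-\Q_h\nabla w)\cdot\bn\|_e\,\|v-\{v\}\|_e,
\]
and apply Cauchy-Schwarz over $(T,e)$ pairs to split into two sums. The first factor is handled by the trace inequality (\ref{trace}) applied to $\nabla w-\Q_h\nabla w\in H^1(T)$, followed by the standard $L^2$-projection error estimate on $[P_j(T)]^d$ (valid since $j=n+k-1\ge k-1$, so polynomials up to degree $k-1$ are preserved): this yields
\[
\|(\nabla w-\Q_h\nabla w)\cdot\bn\|_e^2 \le C\bigl(h_T^{-1}\|\nabla w-\Q_h\nabla w\|_T^2+h_T\|\nabla(\nabla w-\Q_h\nabla w)\|_T^2\bigr)\le Ch_T^{2k-1}|w|_{k+1,T}^2.
\]

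For the second factor, the jump identity (\ref{jp}) gives $\|v-\{v\}\|_e\le \|[v]\|_e$ on both interior and boundary edges, so
\[
\sum_{T\in\T_h}\sum_{e\subset\partial T}h_T^{-1}\|v-\{v\}\|_e^2\le C\sum_{e\in\E_h}h_e^{-1}\|[v]\|_e^2\le C\|v\|_{1,h}^2\le C\3bar v\3bar^2,
\]
where the last step uses the lower bound of (\ref{happy}). Combining these two estimates (after pulling out a common $h_T$-power so that the exponents match to give $h^{2k}$ overall) yields
\[
|\ell(w,v)|\le C\Bigl(\sum_{T\in\T_h}h_T^{2k}|w|_{k+1,T}^2\Bigr)^{1/2}\Bigl(\sum_{T\in\T_h}\sum_{e\subset\partial T}h_T^{-1}\|v-\{v\}\|_e^2\Bigr)^{1/2}\le Ch^k|w|_{k+1}\3bar v\3bar.
\]

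The only subtlety I expect is making sure the boundary-edge case is handled consistently in Case~1 (where $v|_{\partial\Omega}=0$ and $\{v\}=v$ there, so the boundary contribution vanishes) and in Case~2 (where $\{v\}=0$ on $\partial\Omega$, so $\|v-\{v\}\|_e=\|v\|_e=\|[v]\|_e$ feeds directly into the $\|v\|_{1,h}$ norm); in both cases (\ref{jp}) already packages the correct identity, so no separate argument is needed. Everything else is a routine combination of approximation, trace, and the previously established norm equivalence.
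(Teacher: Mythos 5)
Your proposal is correct and follows essentially the same route as the paper: Cauchy--Schwarz on the boundary terms, the trace inequality (\ref{trace}) combined with the $L^2$-projection approximation property for the factor $\nabla w-\Q_h\nabla w$, and the identity (\ref{jp}) together with the norm equivalence (\ref{happy}) to absorb $\sum_e h_e^{-1}\|[v]\|_e^2$ into $\3bar v\3bar^2$. The only difference is presentational: you work edge-by-edge and spell out the approximation step (including the observation $j\ge k-1$), whereas the paper applies Cauchy--Schwarz at the $\partial T$ level and leaves that step implicit.
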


\medskip

\begin{proof}
Using the Cauchy-Schwarz inequality, the trace inequality (\ref{trace}), (\ref{jp}) and  (\ref{happy}), we have
\begin{eqnarray*}
|\ell(w,v)|&=&\left|\sum_{T\in\T_h}\langle (\nabla w-\Q_h\nabla
w)\cdot\bn, v-\{v\}\rangle_\pT\right|\\
&\le & C \sum_{T\in\T_h}\|\nabla w-\Q_h\nabla w\|_{\pT}
\|v-\{v\}\|_\pT\nonumber\\
&\le & C \left(\sum_{T\in\T_h}h_T\|(\nabla w-\Q_h\nabla w)\|_{\pT}^2\right)^{\frac12}
\left(\sum_{e\in\E_h}h_e^{-1}\|[v]\|_e^2\right)^{\frac12}\\
&\le & Ch^{k}|w|_{k+1}\3bar v\3bar,
\end{eqnarray*}
which proves the lemma.
\end{proof}

\smallskip

\begin{lemma}
Let $u\in H^{k+1}(\Omega)$, then
\begin{equation}\label{eee2}
\3baru-Q_hu\3bar\le Ch^k|u|_{k+1}.
\end{equation}
\end{lemma}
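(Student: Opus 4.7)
The plan is to bound $\|\nabla_w(u-Q_hu)\|_T$ element by element and then sum. I will use the identity (\ref{n-1}) (which holds for any $v \in V_h + H_0^1(\Omega)$) together with the key identity $\nabla_w u = \Q_h \nabla u$ from (\ref{key}), plus the approximation result (\ref{approx}) and the analogous estimate for $Q_0$ from (\ref{a2}).

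First I would note that for $\bq \in [P_j(T)]^d$, testing (\ref{d-d}) with $v = u - Q_h u$ gives
\begin{equation*}
(\nabla_w(u-Q_hu),\bq)_T = (\nabla(u-Q_hu),\bq)_T + \langle Q_hu - \{Q_hu\}, \bq\cdot\bn\rangle_{\partial T},
\end{equation*}
because $u$ is continuous with $u|_{\partial\Omega}=0$, so $u - \{u\} = 0$ on every edge for both Case~1 and Case~2 averages. Choosing $\bq = \nabla_w(u-Q_hu)$, applying Cauchy--Schwarz, the trace inequality (\ref{trace}) and the inverse inequality (so that $\|\nabla_w(u-Q_hu)\|_{\partial T} \le Ch_T^{-1/2}\|\nabla_w(u-Q_hu)\|_T$), and cancelling one factor of $\|\nabla_w(u-Q_hu)\|_T$, I obtain
\begin{equation*}
\|\nabla_w(u-Q_hu)\|_T \le \|\nabla(u-Q_hu)\|_T + Ch_T^{-1/2}\|Q_hu - \{Q_hu\}\|_{\partial T}.
\end{equation*}

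The first term is handled directly by (\ref{approx}) and (\ref{a2}), giving $\|\nabla(u-Q_hu)\|_T \le Ch^k|u|_{k+1,T}$. For the edge term I would consider each edge $e \subset \partial T$ in both cases: on interior edges $\|Q_hu - \{Q_hu\}\|_e = \tfrac12\|[Q_hu]\|_e = \tfrac12\|[Q_hu - u]\|_e$, which is controlled by $\|Q_hu - u\|_e$ on the two adjacent elements; on boundary edges in Case~1 the difference vanishes since $\{Q_hu\} = Q_hu$; and on boundary edges in Case~2, $\{Q_hu\} = 0$ and $u = 0$, so $\|Q_hu - \{Q_hu\}\|_e = \|Q_hu - u\|_e$. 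In all scenarios, using the trace inequality (\ref{trace}) together with (\ref{approx})--(\ref{a2}), I get $h_T^{-1/2}\|Q_hu - \{Q_hu\}\|_{\partial T} \le Ch^k|u|_{k+1,\omega_T}$, where $\omega_T$ is the patch of $T$ and its neighbors.

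Summing over $T \in \T_h$ and taking the square root yields the claimed bound $\3bar u - Q_h u \3bar \le Ch^k|u|_{k+1}$. The only subtle point, and what I expect to be the main bookkeeping obstacle, is verifying uniformly across both Case~1 (where $Q_hu$ is built from $I_hu$ on boundary elements) and Case~2 (where $Q_hu = Q_0u$ everywhere and the average on boundary edges is defined to be zero) that the edge term $\|Q_hu - \{Q_hu\}\|_e$ reduces to a standard approximation quantity; this is where the two-boundary-edge restriction in the remark after (\ref{avg2}) and the construction of $I_hu$ on $T_0 \subset T$ are essential so that (\ref{approx}) applies.
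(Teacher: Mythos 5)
Your proposal is correct and follows essentially the same route as the paper's proof: both start from the identity $(\nabla_w(u-Q_hu),\bq)_T=(\nabla(u-Q_hu),\bq)_T+\langle Q_hu-\{Q_hu\},\bq\cdot\bn\rangle_{\partial T}$ (using that $u-\{u\}=0$ on all edges), take $\bq=\nabla_w(u-Q_hu)$, apply the trace and inverse inequalities, replace $[Q_hu]$ by $[Q_hu-u]$, and invoke the approximation estimates for $Q_0u$ and $I_hu$ before summing over $T$. Your explicit case-by-case treatment of the edge term is just an unpacking of the paper's identity (\ref{jp}), so there is no substantive difference.
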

\begin{proof}
It follows from (\ref{d-d}), integration  by parts, (\ref{trace}) and (\ref{jp}),
\begin{eqnarray*}
|(\nabla_w(u-Q_hu), \bq)_{T}|&=&|-(u-Q_hu, \nabla\cdot\bq)_{T}+\l u-\{Q_hu\}, \bq\cdot\bn\r_{\pT}|\\
&=&|(\nabla (u-Q_hu), \bq)_{T}+\l Q_hu-\{Q_hu\}, \bq\cdot\bn\r_{\pT}|\\
&\le& \|\nabla (u-Q_hu)\|_T\|\bq\|_T+Ch^{-1/2}\|[Q_hu]\|_\pT\|\bq\|_T\\
&\le& \|\nabla (u-Q_hu)\|_T\|\bq\|_T+Ch^{-1/2}\|[u-Q_hu]\|_\pT\|\bq\|_T\\
&\le& Ch^k|u|_{k+1, T}\|\bq\|_T.
\end{eqnarray*}
Letting $\bq=\nabla_w(u-Q_hu)$ in the above equation and taking summation over $T$, we have
\[
\3baru-Q_hu\3bar\le Ch^k|u|_{k+1}.
\]
We have proved the lemma.
\end{proof}

\begin{theorem} Let $u_h\in V_h$ be the finite element solution of (\ref{mwg}). Assume the exact solution $u\in H^{k+1}(\Omega)$. Then,
there exists a constant $C$ such that
\begin{equation}\label{err1}
\3bar u-u_h\3bar \le Ch^{k}|u|_{k+1}.
\end{equation}
\end{theorem}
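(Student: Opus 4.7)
The plan is to reduce the theorem to estimates on the auxiliary quantity $\epsilon_h = Q_hu - u_h \in V_h$, since the triangle inequality gives
\[
\3bar u - u_h\3bar \le \3bar u - Q_hu\3bar + \3bar \epsilon_h\3bar,
\]
and the first term is already controlled by the preceding lemma, namely $\3bar u - Q_hu\3bar \le Ch^{k}|u|_{k+1}$. So the work reduces to proving the same bound for $\3bar \epsilon_h\3bar$.

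To bound $\3bar \epsilon_h\3bar$, I would apply the error equation (\ref{ee}) with the test function $v = \epsilon_h \in V_h$. Writing $e_h = (u - Q_hu) + \epsilon_h$ and expanding gives
\[
\3bar \epsilon_h\3bar^2 = (\nabla_w\epsilon_h,\nabla_w\epsilon_h)_{\T_h}
 = \ell(u,\epsilon_h) - (\nabla_w(u-Q_hu),\nabla_w\epsilon_h)_{\T_h}.
\]
Then I would invoke Cauchy--Schwarz on the second term to get $\3bar u - Q_hu\3bar \cdot \3bar \epsilon_h\3bar$, and apply the consistency estimate (\ref{mmm1}) with $w=u$ to get $|\ell(u,\epsilon_h)| \le Ch^k|u|_{k+1}\3bar\epsilon_h\3bar$. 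Dividing through by $\3bar\epsilon_h\3bar$ (treating the trivial case separately) yields
\[
\3bar \epsilon_h\3bar \le Ch^k|u|_{k+1} + \3bar u - Q_hu\3bar \le Ch^k|u|_{k+1},
\]
where the final step uses the approximation lemma (\ref{eee2}).

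Combining the two contributions via the triangle inequality immediately gives \eqref{err1}. I do not anticipate any serious obstacle at this stage: the heavy lifting has been done in the norm equivalence lemma, the consistency bound \eqref{mmm1}, and the projection error estimate \eqref{eee2}. The only minor point to attend to is that the argument should not implicitly use any specific feature of Case 1 versus Case 2 — both cases use the same error equation and the same operator $Q_h$ (defined piecewise appropriately on boundary elements for Case 1 and as $Q_0$ throughout for Case 2), and the preceding lemmas have been stated uniformly in terms of $Q_hu$, so a single proof covers both formulations simultaneously.
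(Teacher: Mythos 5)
Your proposal is correct and follows essentially the same route as the paper: both decompose $e_h=(u-Q_hu)+\epsilon_h$, test the error equation \eqref{ee} with $v=\epsilon_h$, and invoke the consistency bound \eqref{mmm1} together with the projection estimate \eqref{eee2}. The only cosmetic difference is that the paper estimates $\3bar e_h\3bar^2$ directly and absorbs terms via Young's inequality, whereas you bound $\3bar\epsilon_h\3bar$ by cancellation and finish with the triangle inequality; the two are interchangeable.
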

\begin{proof}
It is straightforward to obtain
\begin{eqnarray}
\3bar e_h\3bar^2&=&(\nabla_we_h, \nabla_we_h)_{\T_h}\label{eee1}\\
&=&(\nabla_wu-\nabla_wu_h,\nabla_we_h)_{\T_h}\nonumber\\
&=&(\nabla_wQ_hu-\nabla_wu_h,\nabla_we_h)_{\T_h}+(\nabla_wu-\nabla_wQ_hu,\nabla_we_h)_{\T_h}\nonumber\\
&=&(\nabla_we_h,\nabla_w\epsilon_h)_{\T_h}+(\nabla_w(u-Q_hu),\nabla_we_h)_{\T_h}.\nonumber
\end{eqnarray}
We will bound each terms in (\ref{eee1}).
Letting $v=\epsilon_h\in V_h$ in (\ref{ee})  and using (\ref{mmm1}) and (\ref{eee2}), we have
\begin{eqnarray}
|(\nabla_we_h,\nabla_w\epsilon_h)_{\T_h}|&=&|\ell(u,\epsilon_h)|\nonumber\\
&\le& Ch^{k}|u|_{k+1}\3bar \epsilon_h\3bar\nonumber\\
&\le& Ch^{k}|u|_{k+1}\3bar Q_hu-u_h\3bar\nonumber\\
&\le& Ch^{k}|u|_{k+1}(\3bar Q_hu-u\3bar+\3bar u-u_h\3bar)\nonumber\\
&\le& Ch^{2k}|u|^2_{k+1}+\frac14 \3bare_h\3bar^2.\label{eee3}
\end{eqnarray}
The estimate (\ref{eee2}) implies
\begin{eqnarray}
|(\nabla_w(u-Q_hu),\nabla_we_h)_{\T_h}|&\le& C\3bar u-Q_hu\3bar \3bar e_h\3bar\nonumber\\
&\le& Ch^{2k}|u|^2_{k+1}+\frac14\3bar e_h\3bar^2.\label{eee4}
\end{eqnarray}
Combining the estimates (\ref{eee3}) and  (\ref{eee4}) with (\ref{eee1}), we arrive
\[
\3bar e_h\3bar \le Ch^{k}|u|_{k+1},
\]
which completes the proof.
\end{proof}

\section{Error Estimates in $L^2$ Norm}

The standard duality argument is used to obtain $L^2$ error estimate.
Recall $e_h=u-u_h$ and $\epsilon_h=Q_hu-u_h$.
The considered dual problem seeks $\Phi\in H_0^1(\Omega)$ satisfying
\begin{eqnarray}
-\Delta\Phi&=& e_h\quad
\mbox{in}\;\Omega.\label{dual}
\end{eqnarray}
Assume that the following $H^{2}$-regularity holds
\begin{equation}\label{reg}
\|\Phi\|_2\le C\|e_h\|.
\end{equation}

\begin{theorem} Let $u_h\in V_h$ be the finite element solution of (\ref{mwg}).
Assume that the
exact solution $u\in H^{k+1}(\Omega)$ and (\ref{reg}) holds true.
 Then, there exists a constant $C$ such that
\begin{equation}\label{err2}
\|u-u_h\| \le Ch^{k+1}|u|_{k+1}.
\end{equation}
\end{theorem}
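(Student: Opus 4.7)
The plan is to apply the Aubin--Nitsche duality argument, taking care that each consistency contribution picks up an extra factor of $h$ from the $H^2$-regularity of the dual variable $\Phi$. I would start from $\|e_h\|^2=(e_h,-\Delta\Phi)$ and integrate by parts element by element, obtaining $\sum_T(\nabla e_h,\nabla\Phi)_T-\sum_T\langle\nabla\Phi\cdot\bn,e_h\rangle_{\partial T}$. The boundary integral can then be replaced by $\sum_T\langle\nabla\Phi\cdot\bn,e_h-\{e_h\}\rangle_{\partial T}$ at no cost, since $\nabla\Phi\cdot\bn$ is single-valued across interior edges (so the $\{e_h\}$-contributions cancel pairwise) and $\{e_h\}|_{\partial\Omega}=0$ in both Case~1 and Case~2.

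Next, combining (\ref{key}) with the element-wise comparison of $\nabla v$ versus $\nabla_w v$ obtained by subtracting the classical integration-by-parts identity from the definition (\ref{d-d}) applied to the polynomial test $\bq=\Q_h\nabla\Phi$, I expect to arrive at the identity
\begin{equation*}
\|e_h\|^2 \;=\; (\nabla_w e_h,\nabla_w\Phi)_{\T_h} \;-\; \ell(\Phi,e_h) \;+\; \sum_{T\in\T_h}((I-\Q_h)\nabla\Phi,\nabla e_h)_T.
\end{equation*}

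Each term on the right I aim to bound by $Ch^{k+1}|u|_{k+1}|\Phi|_2$. For the first, I split $\nabla_w\Phi=\nabla_w Q_h\Phi+\nabla_w(\Phi-Q_h\Phi)$ and use the error equation (\ref{ee}) with $v=Q_h\Phi\in V_h$ to rewrite $(\nabla_w e_h,\nabla_w Q_h\Phi)_{\T_h}=\ell(u,Q_h\Phi)$. The crucial bound $|\ell(u,Q_h\Phi)|\le Ch^{k+1}|u|_{k+1}|\Phi|_2$ comes from Cauchy--Schwarz on the edge sum together with $\sum_e h_e^{-1}\|[Q_h\Phi]\|_e^2\le Ch^2|\Phi|_2^2$, which follows from $[\Phi]=0$, the trace inequality (\ref{trace}), and the standard approximation estimate for $\Phi\in H^2$; this is where the extra power of $h$ enters. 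The residual $(\nabla_w e_h,\nabla_w(\Phi-Q_h\Phi))_{\T_h}$ is controlled by Cauchy--Schwarz using (\ref{err1}) for $\3bar e_h\3bar$ and (\ref{eee2}) applied with $k=1$ to $\Phi$. The volume term $((I-\Q_h)\nabla\Phi,\nabla e_h)_T$ collapses to $((I-\Q_h)\nabla\Phi,(I-\Q_h)\nabla u)_T$ because $\nabla u_h\in[P_{k-1}(T)]^d\subset[P_j(T)]^d$ is $L^2$-orthogonal to $(I-\Q_h)\nabla\Phi$, and factors into $Ch|\Phi|_2\cdot h^k|u|_{k+1}$ by standard projection estimates.

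The most delicate piece is $\ell(\Phi,e_h)$, because (\ref{mmm1}) is stated for test functions in $V_h$ while $e_h\notin V_h$. I would recycle the proof of (\ref{mmm1}) but with $v$ replaced by $e_h$, splitting $e_h=\epsilon_h+(u-Q_hu)$ so that $\sum_e h_e^{-1}\|[\epsilon_h]\|_e^2$ is controlled by the norm equivalence (\ref{happy}) together with (\ref{err1}) and (\ref{eee2}), while $\sum_e h_e^{-1}\|[u-Q_hu]\|_e^2$ is bounded via trace and approximation; both are $O(h^{2k}|u|_{k+1}^2)$, giving $|\ell(\Phi,e_h)|\le Ch^{k+1}|u|_{k+1}|\Phi|_2$. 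Summing the three bounds and absorbing $|\Phi|_2\le C\|e_h\|$ via (\ref{reg}) yields $\|e_h\|^2\le Ch^{k+1}|u|_{k+1}\|e_h\|$, which is (\ref{err2}). The main obstacle I foresee is uniformly handling the boundary contributions across Case~1 and Case~2, since in Case~2 one cannot use $e_h|_{\partial\Omega}=0$ and the cancellation must rely purely on the definition $\{e_h\}|_{\partial\Omega}=0$.
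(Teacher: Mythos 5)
Your proposal is correct and follows essentially the same duality argument as the paper: the same decomposition of $\|e_h\|^2$ into $\ell(u,Q_h\Phi)$, $(\nabla_w e_h,\nabla_w(\Phi-Q_h\Phi))_{\T_h}$, the volume term $(\nabla\Phi-\Q_h\nabla\Phi,\nabla e_h)_{\T_h}$, and $\ell(\Phi,e_h)$, with each piece bounded by the same lemmas, including the splitting $e_h=\epsilon_h+(u-Q_hu)$ for the last term. The only (valid) variation is your treatment of the volume term via the $L^2$-orthogonality $\nabla u_h\in[P_{k-1}(T)]^d\subset[P_j(T)]^d$, which is slightly cleaner than the paper's route through the norm equivalence (\ref{happy}) but yields the same bound.
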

\begin{proof}
Testing (\ref{dual}) by $e_h$ and using the fact that $\sum_{T\in\T_h}\langle \nabla
\Phi\cdot\bn, \{e_h\}\rangle_\pT=0$ and (\ref{d-d}) give
\begin{eqnarray*}
\|e_h\|^2&=&-(\Delta\Phi,e_h)\\
&=&(\nabla \Phi,\ \nabla e_h)_{\T_h}-\l
\nabla\Phi\cdot\bn,\ e_h- \{e_h\}\r_{\pT_h}\\
&=&(\Q_h\nabla \Phi,\ \nabla e_h)_{\T_h}+(\nabla\Phi-\Q_h\nabla \Phi,\ \nabla e_h)_{\T_h}-\l
\nabla\Phi\cdot\bn,\ e_h- \{e_h\}\r_{\pT_h}\\
&=&-(\nabla\cdot\Q_h\nabla \Phi,\ e_h)_{\T_h}+\l \Q_h\nabla\Phi\cdot\bn,\ e_h\r_{\pT_h}\\
&+&(\nabla\Phi-\Q_h\nabla \Phi,\ \nabla e_h)_{\T_h}-\l\nabla\Phi\cdot\bn,\ e_h- \{e_h\}\r_{\pT_h}\\
&=&(\Q_h\nabla \Phi,\ \nabla_we_h)_{\T_h}+\l\Q_h\nabla\Phi\cdot\bn,\ e_h-\{e_h\}\r_{\pT_h}\\
&+&(\nabla\Phi-\Q_h\nabla \Phi,\ \nabla e_h)_{\T_h}-\l\nabla\Phi\cdot\bn,\ e_h- \{e_h\}\r_{\pT_h}\\
&=&(\Q_h\nabla \Phi,\ \nabla_we_h)_{\T_h}+(\nabla\Phi-\Q_h\nabla \Phi,\ \nabla e_h)_{\T_h}-\ell(\Phi,e_h).
\end{eqnarray*}
It follows from (\ref{key}) and (\ref{ee})
\begin{eqnarray*}
(\Q_h\nabla \Phi,\ \nabla_we_h)_{\T_h}&=&(\nabla_w \Phi,\;\nabla_w e_h)_{\T_h}\\
&=&(\nabla_w Q_h\Phi,\;\nabla_w e_h)_{\T_h}+(\nabla_w (\Phi-Q_h\Phi),\;\nabla_w e_h)_{\T_h}\\
&=&\ell(u,Q_h\Phi)+(\nabla_w (\Phi-Q_h\Phi),\;\nabla_w e_h)_{\T_h}.
\end{eqnarray*}
Combining the two equations above gives
\begin{eqnarray}
\|e_h\|^2&=&\ell(u,Q_h\Phi)+(\nabla_w (\Phi-Q_h\Phi),\;\nabla_w e_h)_{\T_h}\nonumber\\
&+&(\nabla\Phi-\Q_h\nabla \Phi,\ \nabla e_h)_{\T_h}+\ell(\Phi, e_h).\label{m2}
\end{eqnarray}

Next we will estimate all the terms on the right hand side of (\ref{m2}). Using the Cauchy-Schwarz inequality, the trace inequality (\ref{trace}) and the definitions of $Q_h$ and $\Q_h$
we obtain
\begin{eqnarray*}
|\ell(u,Q_h\Phi)|&\le&\left| \langle (\nabla u-\Q_h\nabla
u)\cdot\bn,\;
Q_h\Phi-\{Q_h\Phi\}\rangle_{\pT_h} \right|\\
&\le& \left(\sum_{T\in\T_h}\|(\nabla u-\Q_h\nabla
u)\|^2_\pT\right)^{1/2}
\left(\sum_{T\in\T_h}\|Q_h\Phi-\{Q_h\Phi\}\|^2_\pT\right)^{1/2}\nonumber \\
&\le& C\left(\sum_{T\in\T_h}h\|(\nabla u-\Q_h\nabla
u)\|^2_\pT\right)^{1/2}
\left(\sum_{T\in\T_h}h^{-1}\|[Q_h\Phi-\Phi]\|^2_\pT\right)^{1/2} \nonumber\\
&\le&  Ch^{k+1}|u|_{k+1}|\Phi|_2.\nonumber
\end{eqnarray*}
It follows from (\ref{err1}) and (\ref{eee2}) that
\begin{eqnarray*}
|(\nabla_w e_h,\ \nabla_w(\Phi-Q_h\Phi))_{\T_h}|&\le& C\3bar e_h\3bar \3bar \Phi-Q_h\Phi\3bar\\
&\le& Ch^{k+1}|u|_{k+1}|\Phi|_2.
\end{eqnarray*}
The norm equivalence (\ref{happy}) implies
\begin{eqnarray*}
|(\nabla\Phi-\Q_h\nabla \Phi,\ \nabla e_h)_{\T_h}|&\le& C(\sum_{T\in\T_h}\|\nabla e_h\|_T^2)^{1/2} (\sum_{T\in\T_h}\|\nabla\Phi-\Q_h\nabla \Phi\|_T^2)^{1/2}\\
&\le& C(\sum_{T\in\T_h}(\|\nabla(u-Q_hu)\|_T^2+\|\nabla(Q_hu-u_h)\|_T^2))^{1/2}\\
&\times &(\sum_{T\in\T_h}\|\nabla\Phi-\Q_h\nabla \Phi\|_T^2)^{1/2}\\
&\le& Ch|\Phi|_2(h^k|u|_{k+1}+\3bar Q_hu-u_h\3bar)\\
&\le& Ch|\Phi|_2(h^k|u|_{k+1}+\3bar u-u_h\3bar+\3bar Q_hu-u\3bar)\\
&\le& Ch^{k+1}|u|_{k+1}|\Phi|_2.
\end{eqnarray*}
Using (\ref{happy}),  (\ref{jp}), (\ref{err1}), and (\ref{eee2}), we obtain
\begin{eqnarray*}
|\ell(\Phi,e_h)|&=&\left|\sum_{T\in\T_h}\l (\Q_h\nabla \Phi-\nabla \Phi)\cdot\bn,\
e_h-\{e_h\}\r_\pT\right|\\
&\le&\sum_{T\in\T_h} h_T^{1/2}\|\Q_h\nabla \Phi-\nabla \Phi\|_\pT  h_T^{-1/2}\|[e_h]\|_\pT\\
&\le&Ch\|\Phi\|_2(\sum_{T\in\T_h}h_T^{-1}(\|[\varepsilon_h]\|^2_\pT+\|[u-Q_hu]\|^2_\pT)^{1/2}\\
&\le&Ch\|\Phi\|_2(\3bar\varepsilon_h\3bar+(\sum_{T\in\T_h}h_T^{-1}\|[u-Q_hu]\|^2_\pT)^{1/2}\\
&\le&Ch\|\Phi\|_2(\3bar e_h\3bar+\3bar u-Q_hu\3bar+Ch^k|u|_{k+1})\\
&\le&  Ch^{k+1 }|u|_{k+1}\|\Phi\|_2.
\end{eqnarray*}
Combining all the estimates above
with (\ref{m2}) yields
$$
\|e_h\|^2 \leq C h^{k+1}|u|_{k+1} \|\Phi\|_2.
$$
The estimate (\ref{err2}) follows from the above inequality and
the regularity assumption (\ref{reg}). We have completed the proof.
\end{proof}

\section{Numerical Example}

 We solve the following Poisson equation on the unit square:
\begin{align} \label{s1} -\Delta u = 2\pi^2 \sin\pi x\sin \pi y,  \quad (x,y)\in\Omega=(0,1)^2,
\end{align} with the boundary condition $u=0$ on $\partial \Omega$.

\begin{figure}[h!]
 \begin{center} \setlength\unitlength{1.25pt}
\begin{picture}(260,80)(0,0)
  \def\tr{\begin{picture}(20,20)(0,0)\put(0,0){\line(1,0){20}}\put(0,20){\line(1,0){20}}
          \put(0,0){\line(0,1){20}} \put(20,0){\line(0,1){20}}  \put(20,0){\line(-1,1){20}}\end{picture}}
 {\setlength\unitlength{5pt}
 \multiput(0,0)(20,0){1}{\multiput(0,0)(0,20){1}{\tr}}}

  {\setlength\unitlength{2.5pt}
 \multiput(45,0)(20,0){2}{\multiput(0,0)(0,20){2}{\tr}}}

  \multiput(180,0)(20,0){4}{\multiput(0,0)(0,20){4}{\tr}}

 \end{picture}\end{center}
\caption{\label{grid1} The first three levels of grids used in the computation of Table \ref{t1}. }
\end{figure}
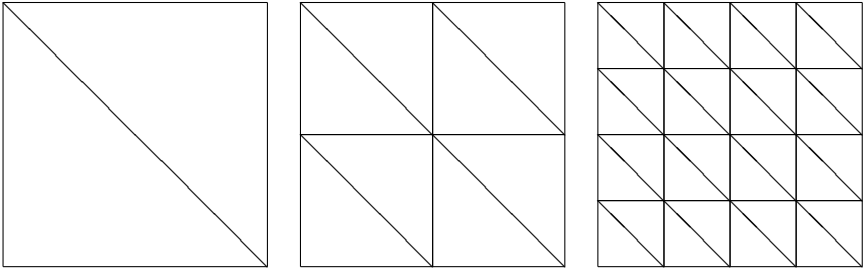

In the first computation, the level one grid consists of two unit right triangles
     cutting from the unit square by a forward
  slash.   The high level grids are the half-size refinements of the previous grid.
The first three levels of grids are plotted in Figure \ref{grid1}.
The error and the order of convergence for the both methods are shown in Tables \ref{t1} and \ref{t2}.
Here on triangular grids,  we let $j=k+1$ defined in (\ref{d-d}) for computing the  weak gradient $\nabla_w v$.
The numerical results confirm the convergence theory.

\begin{table}[h!]
  \centering \renewcommand{\arraystretch}{1.1}
  \caption{Error profiles and convergence rates for \eqref{s1} on triangular grids (Figure \ref{grid1}) }\label{t1}
\begin{tabular}{c|cc|cc|r}
\hline
level & $\|u_h- Q_0 u\|_0 $  &rate & $\3bar u_h- u\3bar $ &rate & dim  \\
\hline
 &\multicolumn{5}{l}{by $P_1$ elements with strongly enforced boundary condition} \\ \hline
 6&   0.5655E-03 & 2.00&   0.8945E-01 & 1.00&    5890\\
 7&   0.1412E-03 & 2.00&   0.4463E-01 & 1.00&   24066\\
 8&   0.3526E-04 & 2.00&   0.2229E-01 & 1.00&   97282\\
 \hline
 &\multicolumn{5}{l}{by $P_1$ elements with weakly enforced boundary condition} \\ \hline
 6&   0.5970E-03 & 2.09&   0.8575E-01 & 0.94&    6144\\
 7&   0.1449E-03 & 2.04&   0.4371E-01 & 0.97&   24576\\
 8&   0.3570E-04 & 2.02&   0.2206E-01 & 0.99&   98304\\
 \hline
 &\multicolumn{5}{l}{by $P_2$ elements with strongly enforced boundary condition} \\ \hline
 6&   0.6635E-05 & 2.99&   0.1797E-02 & 2.00&   11906\\
 7&   0.8314E-06 & 3.00&   0.4489E-03 & 2.00&   48386\\
 8&   0.1040E-06 & 3.00&   0.1122E-03 & 2.00&  195074\\
 \hline
 &\multicolumn{5}{l}{by $P_2$ elements with weakly enforced boundary condition} \\ \hline
 6&   0.6446E-05 & 2.94&   0.1744E-02 & 1.95&   12288\\
 7&   0.8197E-06 & 2.98&   0.4424E-03 & 1.98&   49152\\
 8&   0.1033E-06 & 2.99&   0.1113E-03 & 1.99&  196608\\
 \hline
 &\multicolumn{5}{l}{by $P_3$ elements with strongly enforced boundary condition} \\ \hline
 6&   0.4263E-07 & 4.00&   0.2253E-04 & 3.01&   19970\\
 7&   0.2664E-08 & 4.00&   0.2810E-05 & 3.00&   80898\\
 8&   0.1666E-09 & 4.00&   0.3509E-06 & 3.00&  325634\\
 \hline
 &\multicolumn{5}{l}{by $P_3$ elements with weakly enforced boundary condition} \\ \hline
 6&   0.4311E-07 & 4.02&   0.2193E-04 & 2.97&   20480\\
 7&   0.2679E-08 & 4.01&   0.2772E-05 & 2.98&   81920\\
 8&   0.1670E-09 & 4.00&   0.3485E-06 & 2.99&  327680\\
 \hline
\end{tabular}%
\end{table}%

\begin{table}[h!]
  \centering \renewcommand{\arraystretch}{1.1}
  \caption{Error profiles and convergence rates for \eqref{s1} on triangular grids (Figure \ref{grid1}) }\label{t2}
\begin{tabular}{c|cc|cc|r}
\hline
level & $\|u_h- Q_0 u\|_0 $  &rate & $\3bar u_h- u\3bar $ &rate & dim  \\
\hline
 &\multicolumn{5}{l}{by $P_4$ elements with strongly enforced boundary condition} \\ \hline
 4&   0.6433E-06 & 4.96&   0.7511E-04 & 3.98&    1762\\
 5&   0.2021E-07 & 4.99&   0.4699E-05 & 4.00&    7362\\
 6&   0.6320E-09 & 5.00&   0.2934E-06 & 4.00&   30082\\
 \hline
 &\multicolumn{5}{l}{by $P_4$ elements with weakly enforced boundary condition} \\ \hline
 4&   0.6781E-06 & 5.03&   0.7116E-04 & 3.90&    1920\\
 5&   0.2076E-07 & 5.03&   0.4577E-05 & 3.96&    7680\\
 6&   0.6407E-09 & 5.02&   0.2896E-06 & 3.98&   30720\\
 \hline
&\multicolumn{5}{l}{by $P_5$ elements with strongly enforced boundary condition} \\ \hline
 4&   0.2306E-07 & 5.94&   0.3385E-05 & 5.01&    2498\\
 5&   0.3668E-09 & 5.97&   0.1050E-06 & 5.01&   10370\\
 6&   0.5825E-11 & 5.98&   0.3266E-08 & 5.01&   42242\\ \hline
 &\multicolumn{5}{l}{by $P_5$ elements with weakly enforced boundary condition} \\ \hline
 4&   0.2481E-07 & 6.04&   0.3223E-05 & 4.94&    2688\\
 5&   0.3811E-09 & 6.02&   0.1024E-06 & 4.98&   10752\\
 6&   0.5938E-11 & 6.00&   0.3225E-08 & 4.99&   43008\\
 \hline
\end{tabular}%
\end{table}%

In the next computation,  we use a family of polygonal grids (with 12-side polygons)
   shown in Figure \ref{12gon}.
We let the polynomial degree $j=k+2$ for the weak gradient on such polygonal meshes.
The rate of convergence is listed in Tables \ref{t3}-\ref{t4}.
The convergence history confirms the theory.

\begin{figure}[htb]\begin{center}\setlength\unitlength{1.5in}
    \begin{picture}(3.2,1.4)
 \put(0,0){\includegraphics[width=1.5in]{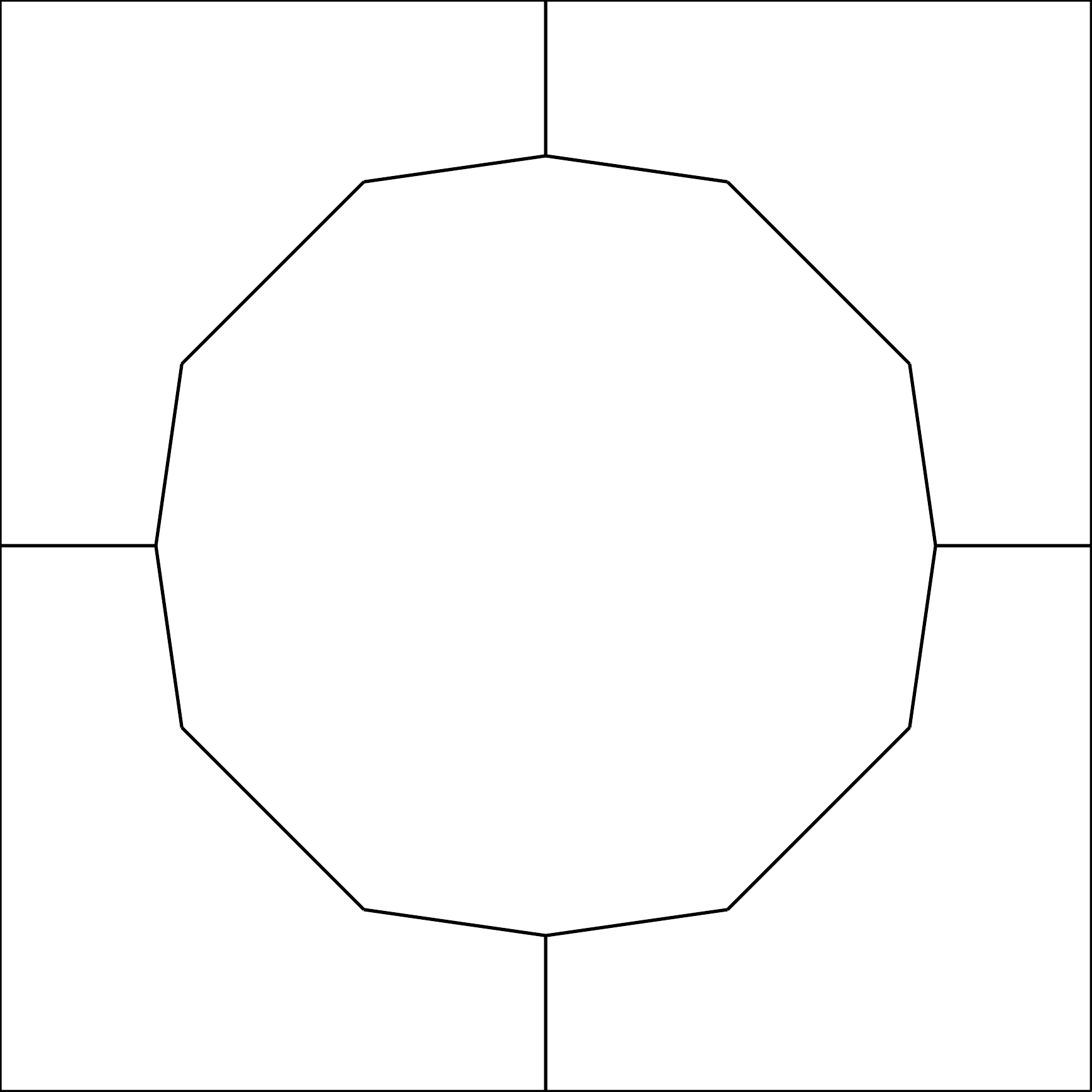}}  
 \put(1.1,0){\includegraphics[width=1.5in]{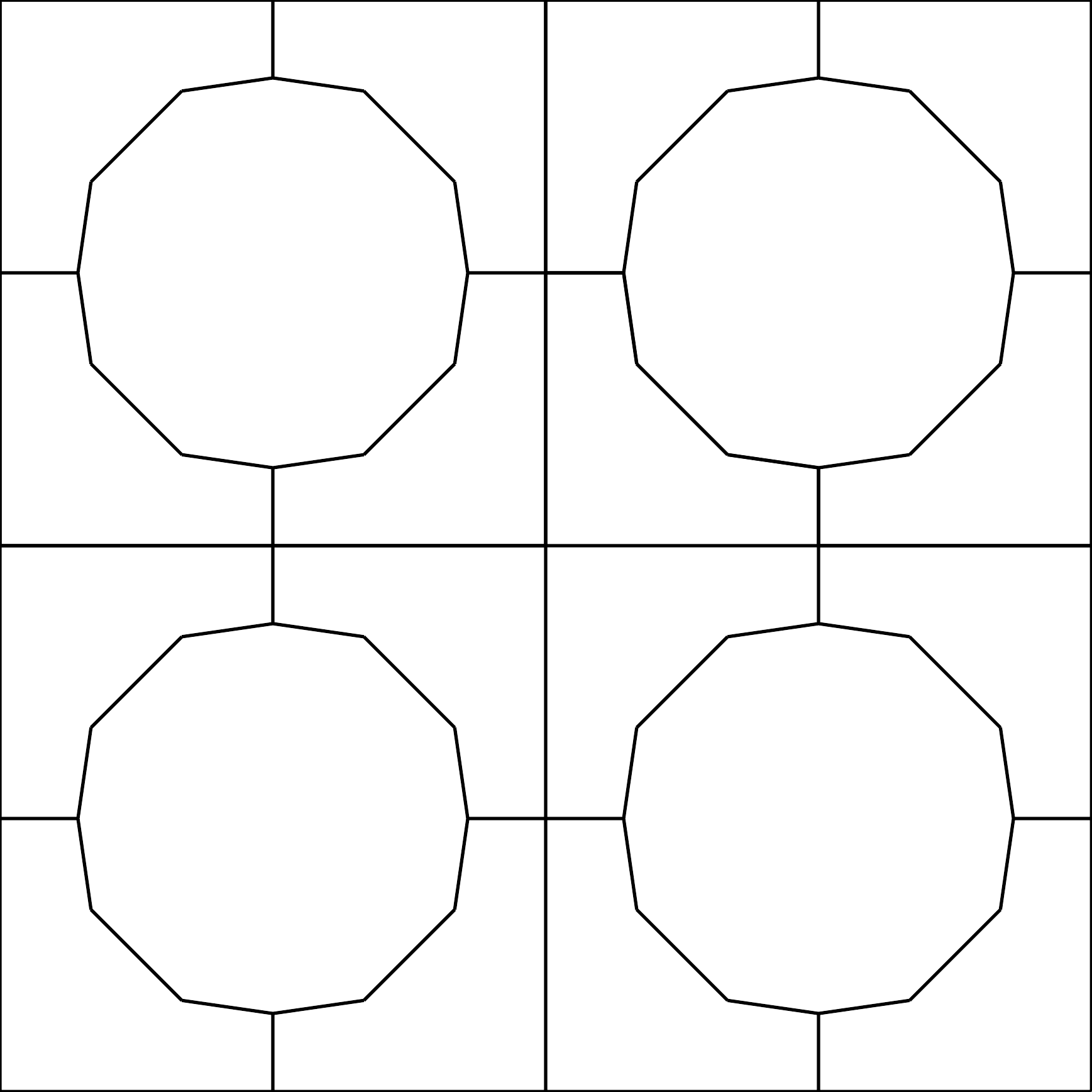}}
 \put(2.2,0){\includegraphics[width=1.5in]{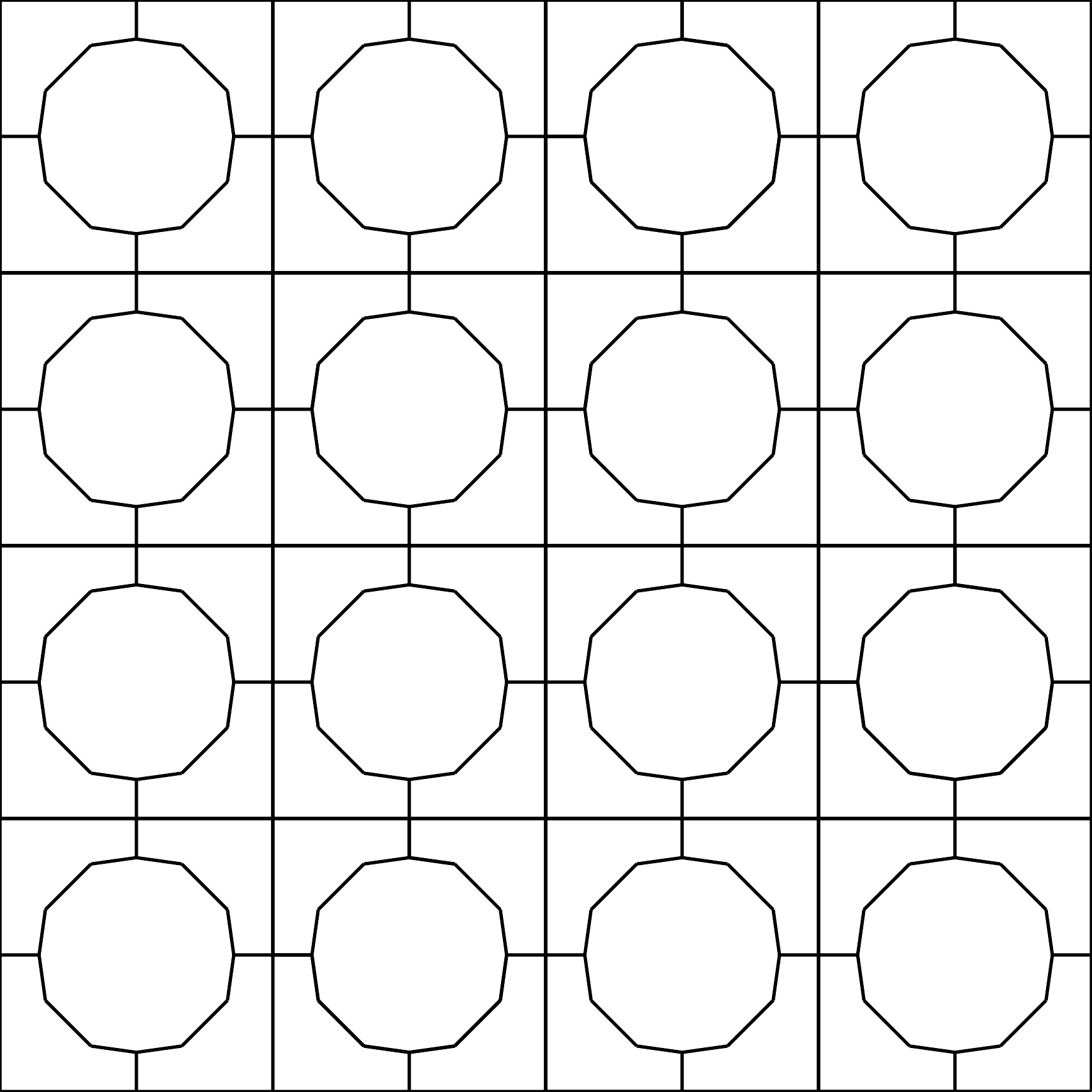}}
    \end{picture}
\caption{ The first three polygonal grids for the computation of Table \ref{t3}.  } \label{12gon}
\end{center}
\end{figure}

\begin{table}[h!]
  \centering \renewcommand{\arraystretch}{1.1}
  \caption{Error profiles and convergence rates for \eqref{s1} on polygonal grids
    shown in Figure \ref{12gon} }\label{t3}
\begin{tabular}{c|cc|cc|r}
\hline
level & $\|u_h- Q_0u\| $  &rate & $\3bar u_h-u\3bar $ &rate &dim \\
  \hline
 &\multicolumn{5}{l}{by $P_1$ elements with strongly enforced boundary condition} \\ \hline
 6&   0.2913E-03 & 2.00&   0.5402E-01 & 1.00&  15100\\
 7&   0.7289E-04 & 2.00&   0.2701E-01 & 1.00&  60924\\
 8&   0.1823E-04 & 2.00&   0.1351E-01 & 1.00&  244732\\
  \hline
 &\multicolumn{5}{l}{by $P_1$ elements with weakly enforced boundary condition} \\ \hline
 6&   0.2982E-03 & 2.03&   0.5333E-01 & 0.98&    15360\\
 7&   0.7374E-04 & 2.02&   0.2684E-01 & 0.99&    61440\\
 8&   0.1833E-04 & 2.01&   0.1346E-01 & 1.00&   245760\\
  \hline
 &\multicolumn{5}{l}{by $P_2$ elements with strongly enforced boundary condition} \\ \hline
 6&   0.1055E-05 & 3.00&   0.7604E-03 & 2.00&     30204\\
 7&   0.1318E-06 & 3.00&   0.1901E-03 & 2.00&   121852\\
 8&   0.1648E-07 & 3.00&   0.4753E-04 & 2.00&   489468\\
  \hline
 &\multicolumn{5}{l}{by $P_2$ elements with weakly enforced boundary condition} \\ \hline
 6&   0.1057E-05 & 3.01&   0.7574E-03 & 1.99&    30720\\
 7&   0.1320E-06 & 3.00&   0.1897E-03 & 2.00&  122880\\
 8&   0.1649E-07 & 3.00&   0.4748E-04 & 2.00&   491520\\
  \hline
 &\multicolumn{5}{l}{by $P_3$ elements with strongly enforced boundary condition} \\ \hline
 4&   0.2706E-05 & 3.99&   0.5478E-03 & 2.99&    3004\\
 5&   0.1696E-06 & 4.00&   0.6862E-04 & 3.00&   12412\\
 6&   0.1060E-07 & 4.00&   0.8582E-05 & 3.00&    50428\\
  \hline
 &\multicolumn{5}{l}{by $P_3$ elements with weakly enforced boundary condition} \\ \hline
 4&   0.2813E-05 & 4.04&   0.5421E-03 & 2.97&     3200\\
 5&   0.1728E-06 & 4.02&   0.6827E-04 & 2.99&   12800\\
 6&   0.1070E-07 & 4.01&   0.8561E-05 & 3.00&    51200\\
 \hline
    \end{tabular}%
\end{table}%

\begin{table}[h!]
  \centering \renewcommand{\arraystretch}{1.1}
  \caption{Error profiles and convergence rates for \eqref{s1} on polygonal grids
    shown in Figure \ref{12gon} }\label{t4}
\begin{tabular}{c|cc|cc|r}
\hline
level & $\|u_h- Q_0u\| $  &rate & $\3bar u_h-u\3bar $ &rate &dim \\
  \hline
 &\multicolumn{5}{l}{by $P_4$ elements with strongly enforced boundary condition} \\ \hline
 2&   0.7295E-04 & 3.68&   0.4484E-02 & 2.85&     232\\
 3&   0.2322E-05 & 4.97&   0.2830E-03 & 3.99&    1068\\
 4&   0.7291E-07 & 4.99&   0.1773E-04 & 4.00&    4540\\
  \hline
 &\multicolumn{5}{l}{by $P_4$ elements with weakly enforced boundary condition} \\ \hline
 2&   0.7529E-04 & 3.74&   0.4413E-02 & 2.83&     300\\
 3&   0.2358E-05 & 5.00&   0.2806E-03 & 3.97&   1200\\
 4&   0.7348E-07 & 5.00&   0.1765E-04 & 3.99&   4800\\
  \hline
 &\multicolumn{5}{l}{by $P_5$ elements with strongly enforced boundary condition} \\ \hline
 2&   0.7161E-05 & 6.38&   0.5901E-03 & 5.31&     336\\
 3&   0.1141E-06 & 5.97&   0.1863E-04 & 4.99&   1516\\
 4&   0.1807E-08 & 5.98&   0.5836E-06 & 5.00&  6396\\
  \hline
 &\multicolumn{5}{l}{by $P_5$ elements with weakly enforced boundary condition} \\ \hline
 2&   0.7233E-05 & 6.42&   0.5875E-03 & 5.31&    420\\
 3&   0.1144E-06 & 5.98&   0.1859E-04 & 4.98&  1680\\
 4&   0.1808E-08 & 5.98&   0.5831E-06 & 5.00&  6720\\
 \hline
    \end{tabular}%
\end{table}%

\section*{Acknowledgment}
We would like to express our appreciation to Junping Wang for his valuable advice.

\end{document}